\documentclass[letterpaper, 10 pt, conference]{ieeeconf}

\bstctlcite{bstctl:etal, bstctl:nodash, bstctl:simpurl}

\usepackage{anyfontsize}
\usepackage{array} 
\usepackage{stfloats}   
\usepackage{mathtools}
\usepackage{xcolor}
\usepackage[utf8]{inputenc}
\usepackage{tabularx}
\usepackage{booktabs} 
\usepackage{mathrsfs}
\usepackage{graphics} 

\usepackage{amsmath} 
\usepackage{amssymb}  
\usepackage{amsthm}
\usepackage{cite}
\usepackage{bm}
\usepackage{acronym}
\usepackage{paralist}
\usepackage{float}
\usepackage[dvipsnames]{xcolor}
\usepackage{epstopdf}
\usepackage{multicol}
\usepackage{tikz}
\usepackage{hyperref}
\usepackage{graphicx}
\usepackage{soul}
\usepackage{macros} 

\usepackage{stfloats}
\fnbelowfloat

 \newcommand{\mydarkred}{\color{black}}
 \newcommand{\myblack}{\color{Black}}

\makeatletter
\hypersetup{colorlinks=true}
\AtBeginDocument{\@ifpackageloaded{hyperref}
  {\def\@linkcolor{blue}
  \def\@anchorcolor{red}
  \def\@citecolor{red}
  \def\@filecolor{red}
  \def\@urlcolor{black}
  \def\@menucolor{red}
  \def\@pagecolor{red}
\begingroup
  \@makeother\`%
  \@makeother\=%
  \edef\x{%
    \edef\noexpand\x{%
      \endgroup
      \noexpand\toks@{%
        \catcode 96=\noexpand\the\catcode`\noexpand\`\relax
        \catcode 61=\noexpand\the\catcode`\noexpand\=\relax
      }%
    }%
    \noexpand\x
  }%
\x
\@makeother\`
\@makeother\=
}{}}
\makeatother

\DeclareMathOperator{\divergence}{\mathrm{div}}

\DeclareMathOperator{\grad}{\mathrm{grad}}
\DeclareMathOperator{\interior}{\mathrm{interior}}

\newcommand{\bequ}{\begin{eqnarray}}
\newcommand{\eequ}{\end{eqnarray}}

\newcommand{\differential}{\mathrm{d}}
\newcommand{\dist}{\mathrm{dist}}

\IEEEoverridecommandlockouts

\def\BibTeX{{\rm B\kern-.05em{\sc i\kern-.025em b}\kern-.08em
    T\kern-.1667em\lower.7ex\hbox{E}\kern-.125emX}}

\definecolor{AA}{RGB}{34,139,34}

\IEEEoverridecommandlockouts
\begin{document}
\bstctlcite{IEEE_b:BSTcontrol}
\title{\LARGE \bf 
{
Schrödinger Bridge Over A Compact Connected Lie Group}

\thanks{This research has been supported by NJIT's startup funds.}
\thanks{Hamza Mahmood$^*$ and Adeel Akhtar$^*$ are with the 
Department of Mechanical \& Industrial Engineering at the New Jersey Institute of Technology, Newark, NJ 07102, USA (email: \texttt{\{hm576, adeel.akhtar\}@njit.edu})}
\thanks{{Abhishek Halder$^{\dag}$ is with the Department of Aerospace Engineering at
Iowa State University, Ames, IA 50011, USA (email: \texttt{ahalder@iastate.edu})}}}

\author{Hamza Mahmood$^*$ \and Abhishek Halder$^{\dag}$ \and Adeel Akhtar$^*$}
\maketitle

\begin{abstract}
This work studies the Schrödinger bridge problem for the kinematic equation on a compact connected Lie group. The objective is to steer a controlled diffusion between given initial and terminal densities supported over the Lie group while minimizing the control effort. We develop a coordinate-free formulation of this stochastic optimal control problem that respects the underlying geometric structure of the Lie group, thereby avoiding limitations associated with local parameterizations or embeddings in Euclidean spaces. We establish the existence and uniqueness of solution to the corresponding Schrödinger system. Our results are constructive in that they derive a geometric controller that optimally interpolates probability densities supported over the Lie group. To illustrate the results, we provide numerical examples on $\mathsf{SO}(2)$ and $\mathsf{SO}(3)$. The codes and animations are publicly available at \texttt{\href{https://github.com/gradslab/SbpLieGroups.git}{https://github.com/gradslab/SbpLieGroups.git}}.
\end{abstract}

{\myblack
}

\section{Introduction}\label{sec:Introduction} 
The Schrödinger bridge problem (SBP) has emerged \cite{chen2021stochastic,CalHal2022} as a powerful framework for controlling uncertainty in stochastic dynamical systems, as it enables minimum effort steering of joint state probability density functions (PDFs) under deadline constraint, instead of merely mitigating uncertainties. The existing SBP literature in systems-control — both theoretical~\cite{CheGeoPav2016, CalHal2019, CalHal2020, CalHal_RefSchBri2020,teter2025hopf,teter2025schrodinger} and application-oriented works~\cite{haddad2020prediction,nodozi2023neural,teter2025probabilistic} — has focused on Euclidean spaces. Here, we consider SBP for controlled diffusion on a compact connected Lie group, and develop a geometric solution for the same. 

Specifically, we consider the stochastic differential equation (SDE) corresponding to the kinematic equation on a compact connected Lie group $\mathsf{G}$, namely, the kinematic equation on $\mathsf{G}$ perturbed by Brownian noise. The controlled state PDFs are supported on $\mathsf{G}$, and Haar measure is used to define integration of functions, including PDFs, over $\mathsf{G}$. Using Hilbert's projective metric~\cite{Bir1957,Bus1973} defined on a closed solid cone in a Banach space, we prove that there exists a unique solution—up to a reciprocal scaling—to the Schrödinger system on $\mathsf{G}$. This solution yields the desired geometric (i.e., coordinate-free) controller for the SBP on $\mathsf{G}$. 

We emphasize that our control design is based on the intrinsic geometric structure of the Lie group, without embedding it in a Euclidean space. This geometric perspective is particularly relevant for Lie groups such as $\SO3$, which arise naturally in applications such as attitude control \cite{lee2013stochastic}, DNA statistical mechanics, steering of flexible needles, and mobile robots~\cite[Ch. 20]{chirikjian2011}.

Our novel contributions are to show the following:
\begin{enumerate}
    \item if $\mathsf{G}$ is a compact connected Lie group, then in the real Banach space $C(\mathsf{G}) $ with the supremum norm\footnote{Here $C(\mathsf{G})$ denotes the space \cite{Piotr2007} of real-valued continuous functions on $\mathsf{G}$, and is equipped with the norm $\|f\|_{\infty} \eqdef \sup_{R \in \mathsf{G}} |f(R)|$ for any $f \in C(\mathsf{G})$.}, the interior of the set $C_{\geq 0}(\mathsf{G})\eqdef\{f \in C(\mathsf{G}) \mid \text{ for all } R \in \mathsf{G}, f(R) \geq 0\}$ is the set $C_{+}(\mathsf{G})\eqdef\{f \in C(\mathsf{G}) \mid \text{ for all } R \in \mathsf{G}, f(R) > 0\}$ (Lemma~\ref{lem:intC{geq 0}}),
    \item using the above, we show that there exists a unique solution (in a projectivized sense) to the Schrödinger system on the compact connected Lie group $\mathsf{G}$ (Theorem~\ref{thm:Soln_Schrödinger_sys_G}), and
    \item from above, we compute and exemplify geometric (coordinate-free) solution to the SBP (Problem~\ref{problem:SBP_on_G}) on a compact connected Lie group $\mathsf{G}$.
\end{enumerate}
In Sec.~\ref{sec:Preliminaries}, we introduce the key ideas and definitions required for problem formulation. Sec.~\ref{sec:Problem_formulation} formulates the problem. Sec.~\ref{sec:schrodinger-bridge-geom} then presents a geometric (coordinate-free) solution to the problem after proving the main result. Sec.~\ref{sec:numerical_simulations} illustrates the results via numerical simulations. Concluding remarks are given in Sec.~\ref{sec:Conclusion}. All proofs appear in the Appendix.

\section{Preliminaries}
\label{sec:Preliminaries}
\noindent\textbf{Lie Group.} A \emph{Lie group} $\mathsf{G}$ is a topological group that is also a smooth manifold, and in which the group and the 
inverse operations are smooth. We say a Lie group is \emph{compact} (resp. \emph{connected}) if the underlying topological space is compact (resp. connected). 
Familiar examples of Lie groups include the matrix Lie groups~\cite{tapp2016matrix} such as orthogonal and special orthogonal groups: $\mathsf{O}(n)$ and $\mathsf{SO}(n)$, unitary and special unitary groups: $\mathsf{U}(n)$ and $\mathsf{SU}(n)$, for integer $n\geq 2$. All of these four Lie groups are compact, all but $\mathsf{O}(n)$ are connected.

\noindent\textbf{Calculus on Lie Group.} Let $\mathfrak{X}(\mathsf{G})$ denote the set of all smooth vector fields on $\mathsf{G}$. Then the \emph{Laplace--Beltrami operator on $\mathsf{G}$} is the mapping {\mydarkred $\Delta_\mathsf{G} : C^{\infty}(\mathsf{G}) \to C^{\infty}(\mathsf{G})$}, given by
\(
\Delta_\mathsf{G} f \;\eqdef\; \divergence ( \grad f ), \; f \in C^{\infty}(\mathsf{G}), \)
where $\grad : C^{\infty}(\mathsf{G}) \to \mathfrak{X}(\mathsf{G})$ is the gradient of a smooth real-valued function defined on $\mathsf{G}$, and $\divergence : \mathfrak{X}(\mathsf{G}) \to C^{\infty}(\mathsf{G})$ is the divergence of a smooth vector field on $\mathsf{G}$.

To integrate functions over a compact Lie group, we require the notion of \emph{Haar measure}, which is an example of Radon measure. For our compact Lie group $\mathsf{G}$, a \emph{Radon measure} is a measure on the $\sigma$-algebra of Borel sets of $\mathsf{G}$ that is finite on all compact subsets, outer regular on all Borel subsets, and inner regular on all open subsets of $\mathsf{G}$~\cite[p. 212]{folland1999real}. A \emph{left Haar measure} (or simply \emph{Haar measure}) is a (non-zero) left-invariant Radon measure $\mu$ on $\mathsf{G}$. Left-invariant means that for any Borel subset $E \subset \mathsf{G}$ and for any $g \in \mathsf{G}$, we have $\mu(gE)=\mu(E),$ where $gE \eqdef \{\, gx \, | \, x \in E \,\}$. Likewise, we can define a right Haar measure. 

On any locally compact Lie group $\mathsf{G}$, there exists a left Haar measure~\cite[Theorem 5.1.1]{Faraut2008} that is unique up to multiplication by a positive scalar. For compact $\mathsf{G}$, this left Haar measure can be normalized to be a probability measure.

\noindent\textbf{The Kinematic Equation on a Lie Group.} Let $\mathsf{G}$ be a Lie group with identity element $e$, and let\footnote{Here $\mathsf{T}_{e}\mathsf{G}$ denotes the tangent space of $\mathsf{G}$ at the identity element $e$.} $\mathfrak{g}:={\mydarkred \mathsf{T}_{e}\mathsf{G}}$ be the associated {\mydarkred finite-dimensional} Lie algebra having dimension $\dim \mathfrak{g}$. The ``hat map" $\hat{\cdot} : \mathbb{R}^{\dim \mathfrak{g}} \to \mathfrak{g}$ is an isomorphism between the vector spaces $\mathbb{R}^{\dim \mathfrak{g}}$ and $\mathfrak{g}$. The inverse of the hat map is $(\cdot)^{\vee} : \mathfrak{g} \to \mathbb{R}^{\dim \mathfrak{g}}$.

We denote the configuration of a system evolving on $\mathsf{G}$ at time $t$ by $R(t) \in \mathsf{G}$. Its time derivative $\dot{R}(t)$ lies in the tangent space ${\mydarkred \mathsf{T}_{R(t)}\mathsf{G}}$. Let $\Omega(R,t)$ be a vector in $\mathbb{R}^{\dim \mathfrak{g}}$ at time $t$. The time evolution of the configuration is then governed by the \emph{kinematic equation}~\cite[Ch. 5]{bullo2005geometric}:
\begin{equation}
\label{eq:kinematic_eq_on_G}
\dot{R} = R \, \hat{\Omega}(R,t), \quad\text{where}\quad\dot{R}(t):=\frac{\differential}{\differential t}R(t).
\end{equation}
Since $\hat{\Omega}(R,t) \in \mathfrak{g} = {\mydarkred \mathsf{T}_{e}\mathsf{G}}$, left multiplication by $R(t)$ maps this element of the Lie algebra to an element in the tangent space $\mathsf{T}_{R(t)}\mathsf{G}$. So $R\hat{\Omega}(R,t)\in\mathsf{T}_{R(t)}\mathsf{G}$ ensures that the trajectory $R(t)$ remains on $\mathsf{G}$ at all times $t$.

The kinematic equation \eqref{eq:kinematic_eq_on_G} is a deterministic differential equation. Next, we describe stochastic differential equation (SDE) on $\mathsf{G}$.

\noindent\textbf{Stratonovich SDE on a Compact Lie Group.}
We consider the \emph{Stratonovich SDE} on a compact Lie group $\mathsf{G}$ with its Lie algebra $\mathfrak g$, wherein $\mathsf{G}$ is viewed as a Riemannian manifold~\cite[Ch. 1]{hsu2002}.

Specifically, for fixed $m \in \mathbb N$, consider an $\mathbb R^m$-valued standard Brownian motion $(W_t)_{t\ge0}$. Given smooth vector fields $V_i$ on $\mathsf{G}$ ${\mydarkred \forall i \in \{0, 1,\dots,m\}}$, we say a $\mathsf{G}$-valued continuous semimartingale $(R(t))_{t\ge 0}$ solves the Stratonovich SDE
\[
\differential R \;=\; V_0(R)\:\differential t \;+\; \sum_{i=1}^m V_i(R)\circ \differential W_t^i, 
\qquad R(0) = {\mydarkred g} \in \mathsf{G},
\]
if for any smooth test function $f\in C^\infty(\mathsf{G})$, the following holds:  
\begin{align*}
f(R(t)) = f(R(0)) &+ \!\int_0^t \!(V_0 f)(R(s))\:\differential s\\
&+\sum_{i=1}^m \!\int_0^t \!(V_i f)(R(s))\circ\differential W_s^i.
\end{align*} 
In above, ${\mydarkred V_i f}$ denotes the directional derivative of $f$ along the vector field ${\mydarkred V_i}$. The Stratonovich SDE used throughout this work is consistent with the classical chain rule of differential geometry~\cite[Ch. 1]{hsu2002}.
\begin{definition}[Stratonovich SDE for the Kinematics on a Compact Lie Group]\label{def:StratoSDE}
Consider a compact Lie group $\mathsf{G}$ with its Lie algebra $\mathfrak g$. Let $\Omega: \mathsf{G} \times[0,1]\to\mathbb{R}^{\dim \mathfrak{g}}$ be a time-varying vector field on $\mathsf{G}$. For fixed $m \in \mathbb N$, consider an $\mathbb R^m$-valued standard Brownian motion $(W_t)_{t\ge0}$, and let $\{\sigma_i : \mathsf{G} \to \Real^{\dim \mathfrak{g}}\}_{i=1}^m$ be smooth noise amplitudes. Then the stochastic kinematic equation on $\mathsf{G}$ is the \emph{Stratonovich SDE}
$$\differential R \;=\; R\,\widehat{\Omega}(R,t)\:\differential t \;+\; \sum_{i=1}^m R\,\widehat{\sigma_i}(R)\;\circ \differential W_t^i, \quad R(0)\in \mathsf{G}.$$
\end{definition}

\begin{remark}[Unimodularity and compact Lie groups]
A locally compact Lie group where left Haar measure is also a right Haar measure is called \emph{unimodular}. 
Every compact Lie group is unimodular~\cite[Proposition 5.1.3]{Faraut2008}. Since our subsequent development considers compact Lie groups, we simply refer to the measure as Haar measure.
\end{remark}

\begin{proposition}[Brownian Motion on a Unimodular Lie Group]\label{Prop:unimodular}~\cite[Corollary 1]{Lee2025}
Let $\mathsf{G}$ be a unimodular Lie group with its Lie algebra $\mathfrak{g}$, and $n:=\dim\mathfrak{g}$. Consider an inner product $\langle \cdot, \cdot \rangle_{\mathfrak{g}} : \mathfrak{g} \times \mathfrak{g} \to \mathbb{R}$ and an orthonormal basis $\{e_1^{\mathfrak{g}}, e_2^{\mathfrak{g}}, \dots , e_n^{\mathfrak{g}}\}$ of $\mathfrak{g}$ w.r.t. $\langle \cdot, \cdot \rangle_{\mathfrak{g}}$\,. Let $(W_t)_{t\ge0}$ be an $\mathbb R^n$-valued standard Brownian motion. Then the Brownian motion on $\mathsf{G}$ is given by  
\begin{equation}
\label{eq:SDE_Brownian_unimodular} 
R^{-1}\:\differential R \;=\; \sum_{i=1}^{n} e_i^{\mathfrak{g}} \circ\differential W^i_t .
\end{equation}
\end{proposition}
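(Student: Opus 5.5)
The plan is to verify that the process defined by \eqref{eq:SDE_Brownian_unimodular} has generator $\tfrac12\Delta_{\mathsf{G}}$, where $\Delta_{\mathsf{G}}$ is the Laplace--Beltrami operator of the left-invariant Riemannian metric induced by $\langle\cdot,\cdot\rangle_{\mathfrak g}$. This is the standard characterization of Brownian motion on a Riemannian manifold \cite[Ch.~1]{hsu2002}.

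\textbf{Step 1: the generator.} Let $X_i$ be the left-invariant vector field with $X_i(R)=R\,e_i^{\mathfrak g}$. The SDE is then the Stratonovich SDE $\differential R=\sum_i X_i(R)\circ \differential W^i_t$ with $V_0=0$. For $f\in C^\infty(\mathsf{G})$, apply the defining identity of the Stratonovich SDE to $f$. Then apply it again to each $X_i f$ to convert Stratonovich integrals into It\^o integrals plus quadratic covariation terms. This gives
\[
f(R(t))=f(R(0))+\sum_i\int_0^t (X_if)(R(s))\,\differential W^i_s+\tfrac12\int_0^t \sum_i (X_i^2 f)(R(s))\,\differential s .
\]
Hence the generator is $L=\tfrac12\sum_i X_i^2$.

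\textbf{Step 2: identify $\sum_i X_i^2$ with $\Delta_{\mathsf{G}}$.} Since $\{X_i\}$ is a global orthonormal frame, $\grad f=\sum_i (X_if)X_i$. Therefore
\[
\Delta_{\mathsf{G}}f=\divergence(\grad f)=\sum_i X_i^2 f+\sum_i (X_i f)\,\divergence X_i .
\]
It remains to show $\divergence X_i=0$. The Riemannian volume form of a left-invariant metric is left-invariant, so it is a left Haar measure $\mu$. The flow of $X_i$ is right translation $R\mapsto R\exp(t e_i^{\mathfrak g})$. Unimodularity (cf.\ the Remark on unimodularity) means $\mu$ is also right-invariant, so this flow preserves $\mu$. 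By the definition of divergence as the rate of change of volume under the flow, $\divergence X_i=0$.

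An alternative route to the same conclusion computes in the orthonormal frame. There $\divergence X_i=-\mathrm{tr}(\mathrm{ad}_{e_i^{\mathfrak g}})$, and for a connected group unimodularity is equivalent to $\mathrm{tr}\,\mathrm{ad}\equiv 0$.

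\textbf{Conclusion and main obstacle.} With $\divergence X_i=0$, we get $L=\tfrac12\Delta_{\mathsf{G}}$, so $R(t)$ is Brownian motion on $\mathsf{G}$ by \cite[Ch.~1]{hsu2002}. The main obstacle is Step 2, namely justifying carefully that right-invariance of the Haar measure forces the left-invariant fields to be divergence-free. I would handle it through the modular function: in general, $\divergence X_i$ equals $-\tfrac{\differential}{\differential t}\log\Delta_{\mathrm{mod}}(\exp te_i^{\mathfrak g})\big|_{t=0}$, which vanishes exactly when $\mathsf{G}$ is unimodular.
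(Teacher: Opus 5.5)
Your argument is correct. Note that the paper gives no proof of this proposition and simply imports it from \cite[Corollary 1]{Lee2025}, so your derivation is a self-contained substitute rather than a variant of an internal argument.

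Your two steps are the right ones. The Stratonovich-to-It\^o conversion under the paper's test-function definition yields the generator $\tfrac12\sum_i X_i^2$. The identity $\Delta_{\mathsf G}f=\sum_i X_i^2 f+\sum_i (X_i f)\,\divergence X_i$ then isolates the single place where unimodularity enters, namely $\divergence X_i=-\mathrm{tr}(\mathrm{ad}_{e_i^{\mathfrak g}})=0$.

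Compared with the bare citation, your route buys three things:
\begin{enumerate}
\item It makes explicit that ``the Brownian motion'' refers to the left-invariant metric induced by $\langle\cdot,\cdot\rangle_{\mathfrak g}$, which the statement leaves implicit.
\item It shows what changes without unimodularity: the SDE for $R^{-1}\differential R$ acquires the extra drift $-\tfrac12\sum_i \mathrm{tr}(\mathrm{ad}_{e_i^{\mathfrak g}})\,e_i^{\mathfrak g}\,\differential t$.
\item The same fact $\divergence X_i=0$ gives $X_i^{*}=-X_i$ in $L^2(\mathsf G,\mu)$. This is exactly what licenses the diffusion term $\tfrac{\sigma^2}{2}\Delta_{\mathsf G}\rho$ in the Fokker--Planck equation \eqref{2nd_SBP_on_G_varprob}, written with respect to Haar measure.
\end{enumerate}

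Two minor points. First, your alternative route only needs the implication ``unimodular $\Rightarrow$ $\mathrm{tr}\,\mathrm{ad}\equiv 0$''. That implication holds without connectedness, which the proposition does not assume. Second, the sign in your modular-function formula depends on the convention: with $\mu(Ex)=\Delta_{\mathrm{mod}}(x)\mu(E)$ one has $\divergence X_i=+\tfrac{\differential}{\differential t}\log\Delta_{\mathrm{mod}}(\exp t e_i^{\mathfrak g})\big|_{t=0}$. This is harmless, since only its vanishing is used.
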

\noindent\textbf{Hilbert's Projective Metric.} We need the following definitions to define this metric.
\begin{definition}[Closed solid cone in a Banach space {\cite[Sec.~2]{Bus1973}}]
\label{def:closed_solid_cone} Let $\left(\mathcal{X}{\mydarkred, \| \cdot \|_{\mathcal{X}}}\right)$ be a real Banach space. We say $\mathcal{K}\subseteq\mathcal{X}$ is a \emph{closed solid cone} if $\mathcal{K}$ is closed in $\mathcal{X}$, and
\begin{enumerate}[(i)]
    \item $\interior \mathcal{K}$ is non-empty,
    \item for any $\mathrm{\mathbf{x}},\mathrm{\mathbf{y}} \in \mathcal{K}$, we have $\mathrm{\mathbf{x}} + \mathrm{\mathbf{y}} \in \mathcal{K}$,   
    \item for any $\lambda \geq 0$ and $\mathrm{\mathbf{x}}\in \mathcal{K}$, we have $\lambda \mathrm{\mathbf{x}} \in \mathcal{K}$, and   
    \item $\mathcal{K} \cap -\mathcal{K} = \{\mathbf{0}\},$ where $\mathbf{0}$ is the zero element of $\mathcal{X}$. 
\end{enumerate}
\end{definition} 
\begin{definition}[Hilbert's projective metric {\cite[Def.~2.1 and Def.~2.2]{Bus1973}}]
\label{def:Hilbert_metric}
Consider a closed solid cone $\mathcal{K}$ in a real Banach space $\left(\mathcal{X}{\mydarkred, \| \cdot \|_{\mathcal{X}}}\right)$. 
Let $\preceq$ be a partial order relation in $\mathcal{X}$ induced by $\mathcal{K}$, defined as follows: for any $\mathrm{\mathbf{x}},\mathrm{\mathbf{y}} \in \mathcal{X}$, 
$
\mathrm{\mathbf{x}} \preceq \mathrm{\mathbf{y}} \,\Leftrightarrow\, \mathrm{\mathbf{y}} - \mathrm{\mathbf{x}} \in \mathcal{K}\,.     
$
Define $\mathcal{K}^{+} \eqdef \mathcal{K} \setminus \{\mathbf{0}\}$ and let $M(\mathbf{x}, \mathbf{y}) \eqdef \inf \left\{ \lambda \mid \mathbf{x} \preceq \lambda \mathbf{y} \right\}$ and $m(\mathbf{x}, \mathbf{y}) \eqdef \sup \left\{ \lambda \mid \lambda \mathbf{y} \preceq \mathbf{x} \right\}$ $\forall\mathrm{\mathbf{x}}, \mathrm{\mathbf{y}} \in \mathcal{K}^{+}$, with the convention $\inf \emptyset = \infty$. Then \emph{Hilbert's projective metric} 
\begin{align}
\label{eq:Hilbert_proj_metric}
d_H(\mathbf{x}, \mathbf{y}) := \log\left(M(\mathbf{x}, \mathbf{y})/m(\mathbf{x}, \mathbf{y})\right) \quad \forall\mathrm{\mathbf{x}}, \mathrm{\mathbf{y}} \in \mathcal{K}^{+}.
\end{align}   
\end{definition}

 


\begin{remark}
\label{rem:d_H_properties}
From Definition~\ref{def:Hilbert_metric}, it follows that for any $\mathbf{x}, \mathbf{y} \in \interior \mathcal{K}$, $d_H(\mathbf{x},\mathbf{y})$ is finite \cite[proof of Theorem 2.1]{Bus1973}, and that $d_H(\lambda\mathbf{x},\mu\mathbf{y}) = d_H(\mathbf{x},\mathbf{y})$ $\forall\lambda, \mu > 0$ \cite[Lemma 2.2]{Bus1973}.
\end{remark}

\section{Problem Formulation}
\label{sec:Problem_formulation}
{\mydarkred In this section, we formulate the SBP for the kinematic equation on a compact connected Lie group $\mathsf{G}$ with its Lie algebra $\mathfrak{g}$ of dimension $n$. Despite apparent similarity to the SBP on $\Real^n$, the subsequent analysis will reveal that new and nontrivial issues arise due to the geometric objects involved.

To begin, let $\mu$ be the Haar (volume) measure on $\mathsf{G}$, and define $\mathcal{U}$ to be the set of finite energy controls on $\mathsf{G}$, i.e.,
\begin{equation}
\label{eq:Controls}
\begin{aligned}
    \mathcal{U} \eqdef 
    \biggl\{ {\Omega} : \mathsf{G} \times [0,1] &\rightarrow \mathbb{R}^{n} \;\bigg|\;
    \text{for each } t\in[0,1], \\[2pt]
    &\int_{\mathsf{G}} \|\Omega(R,t)\|_{2}^2\, \differential\mu(R) < \infty
    \biggr\}.
\end{aligned}
\end{equation}
Also, let
\begin{align}
\label{eq:P_2_G}
&\mathcal{P}_2(\mathsf{G}) \eqdef \biggl\{ \rho : \mathsf{G} \rightarrow \mathbb{R}_{\geq 0} \bigg|
\int_{\mathsf{G}} \rho \, \mathrm{d}\mu(R) = 1, \notag \\ \;
&\;\;\;\;\;\;\;\;\;\;\;\;\;\;\;\;\;\int_{\mathsf{G}} {\dist(R,R_0)^2} \, \rho \, \mathrm{d}\mu(R) < \infty \biggr\},
\end{align} 
where $\dist(R,R_0)$ denotes the \emph{Riemannian distance} of $R \in \mathsf{G}$ from a fixed $R_0\in \mathsf{G}$.



}
\begin{problem}
\label{problem:SBP_on_G}
{\mydarkred Let $\mathsf{G}$ be a compact connected Lie group with its Lie algebra $\mathfrak{g}$ of dimension $n$. Consider an inner product $\langle \cdot, \cdot \rangle_{\mathfrak{g}} : \mathfrak{g} \times \mathfrak{g} \to \mathbb{R}$\, and an orthonormal basis $\{e_1^{\mathfrak{g}}, e_2^{\mathfrak{g}}, \dots , e_n^{\mathfrak{g}}\}$ of $\mathfrak{g}$ w.r.t. $\langle \cdot, \cdot \rangle_{\mathfrak{g}}$\,. Let $(W_t)_{t\ge0}$ be an $\mathbb R^n$-valued standard Brownian motion. 
Design a controller $\Omega \in \mathcal{U}$ that solves the stochastic optimal control problem:
\begin{subequations}
\label{eq:SBP_on_G}
\begin{align}
\inf_{\Omega \in \mathcal{U}}& \;\mathbb{E} \left\{ \int_{0}^{1} \frac{1}{2} \|\Omega(R,t)\|_{2}^{2} \, \mathrm{d}t \right\} \label{1st_SBP_on_G} \\
\mathrm{subject\,\,to}& \; 
\mathrm{d}R = R\, \widehat{\Omega}(R,t)\, \mathrm{d}t + \sigma\sum_{i=1}^{{n}} {R e_i^{\mathfrak{g}} \; \circ \; }     \mathrm{d}W^i_t, \label{2nd_SBP_on_G} \\
&R(t=0) \sim \rho_{0}(R), \quad R(t=1) \sim \rho_{1}(R){\mydarkred ,} \label{3rd_SBP_on_G}
\end{align}
\end{subequations} 
where $\sigma > 0$ denotes the (isotropic) diffusion strength, $\rho_0,\rho_1 \in \mathcal{P}_2(\mathsf{G})$, and the expectation in \eqref{1st_SBP_on_G} is w.r.t. the controlled state PDF $\rho$\,, that is, $\mathbb{E}\{\cdot\} := \int_{\mathsf{G}} (\cdot) \, \rho \, \mathrm{d}\mu(R)$\,.} 
\end{problem}%
\noindent An equivalent variational formulation for Problem \ref{problem:SBP_on_G} is
\begin{subequations}
\label{eq:SBP_on_G_varprob}
\begin{align}
\inf_{\rho, \Omega} \quad \quad \; & \int_0^1 \!\!\int_{\mathsf{G}} \frac{1}{2} \|\Omega(R,t)\|_{2}^2 \, \rho(R,t) \, \mathrm{d}\mu(R) \, \mathrm{d}t \label{1st_SBP_on_G_varprob} \\
\text{subject to} \; &\partial_t \rho = -\mathrm{div}(\rho R\:\widehat{\Omega}(R,t)) + \frac{\sigma^2}{2} \Delta_{\mathsf{G}} \rho(R,t),
\label{2nd_SBP_on_G_varprob} \\
&\rho(R,0) = \rho_0(R), \qquad \rho(R,1) = \rho_1(R), \label{3rd_SBP_on_G_varprob}
\end{align}
\end{subequations}
where \eqref{2nd_SBP_on_G_varprob} is the Fokker-Planck or Kolmogorov’s forward PDE associated with the controlled SDE\footnote{The SDE \eqref{2nd_SBP_on_G} is a consequence of Definition \ref{def:StratoSDE} and Proposition \ref{Prop:unimodular}.} \eqref{2nd_SBP_on_G} on $\mathsf{G}$.

\section{A Geometric Coordinate-free Solution}
\label{sec:schrodinger-bridge-geom}
Using the notations and ideas introduced thus far, this section establishes the existence-uniqueness of a solution as well as an optimal controller synthesis for Problem~\ref{problem:SBP_on_G}.

{
{  
\subsection{Preparatory results}
For the differential operator $L :=\tfrac{\sigma^2}{2}\,\Delta_{\mathsf{G}}$, consider the heat semigroup \((T_t)_{0 \leq t \leq 1}= (e^{t L})_{0 \leq t \leq 1}\) on \(L^2(\mathsf{G},\mu)\) and the heat kernel \(k_t:\mathsf{G} \times \mathsf{G} \to\mathbb R_{\geq 0}\), i.e., for any $f\in L^2(\mathsf{G})$ and for each $t \in [0,1]$,
\begin{equation}
\label{eq:(T_t f)(R)}
(T_t f)(R) \;=\; \int_{\mathsf{G}} k_t(R,\tilde{R})\,f(\tilde{R})\,\differential\mu(\tilde{R}).
\end{equation} 
Because of the connectedness of $\mathsf{G}$, the heat kernel $k_t$ is strictly positive~\cite[Corollary 8.12]{Grig2012}. 

Recall the definitions of the sets $C_{\geq 0}(\mathsf{G}), C_{+}(\mathsf{G})$ from Sec. \ref{sec:Introduction}. 
Notice that $C_{+}(\mathsf{G})$ is nonempty: the function $u : \mathsf{G} \to \mathbb{R}$ defined by $u(R) = 1$ for all $R \in \mathsf{G}$, is clearly in $C_{+}(\mathsf{G})$\,. 
We next present a few auxiliary lemmas that find use in proving our main result, Theorem \ref{thm:Soln_Schrödinger_sys_G}.

\begin{lemma}[Interior of \(C_{\geq 0}(\mathsf{G})\)]
\label{lem:intC{geq 0}}
{\mydarkred Let $\mathsf{G}$ be a compact connected Lie group.} Then, in the real Banach space $\left(C(\mathsf{G}), \|\cdot\|_{\infty} \right)$, we have 
$\interior C_{\geq 0}(\mathsf{G}) = C_{+}(\mathsf{G})$.
\end{lemma}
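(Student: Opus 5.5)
We prove the two inclusions $C_{+}(\mathsf{G}) \subseteq \interior C_{\geq 0}(\mathsf{G})$ and $\interior C_{\geq 0}(\mathsf{G}) \subseteq C_{+}(\mathsf{G})$ separately, working directly with the supremum norm $\|\cdot\|_\infty$. The only structural facts about $\mathsf{G}$ we need are that it is a compact Hausdorff topological space, so continuous real-valued functions attain their extrema, and that it is nonempty. Connectedness and the Lie structure play no role here.

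\emph{First inclusion.} Fix $f \in C_{+}(\mathsf{G})$. Since $\mathsf{G}$ is compact and $f$ is continuous, $f$ attains its minimum at some $R_\ast \in \mathsf{G}$. Set $\delta \eqdef f(R_\ast) > 0$. Take any $g \in C(\mathsf{G})$ with $\|g - f\|_\infty < \delta$. Then for every $R \in \mathsf{G}$,
\[
g(R) \;\geq\; f(R) - \|g-f\|_\infty \;>\; \delta - \delta \;=\; 0 .
\]
Hence the open ball of radius $\delta$ around $f$ lies in $C_{+}(\mathsf{G}) \subseteq C_{\geq 0}(\mathsf{G})$, so $f \in \interior C_{\geq 0}(\mathsf{G})$. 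Compactness is essential in this step: without a strictly positive minimum, the radius $\delta$ could collapse to zero.

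\emph{Second inclusion, by contrapositive.} Let $f \in C_{\geq 0}(\mathsf{G}) \setminus C_{+}(\mathsf{G})$. Then there is some $R_0 \in \mathsf{G}$ with $f(R_0) = 0$. For any $\varepsilon > 0$, consider $g_\varepsilon \eqdef f - \tfrac{\varepsilon}{2}u$, where $u \equiv 1$ is the constant function noted just before the lemma. Then $g_\varepsilon \in C(\mathsf{G})$ and $\|g_\varepsilon - f\|_\infty = \varepsilon/2 < \varepsilon$. However, $g_\varepsilon(R_0) = -\varepsilon/2 < 0$, so $g_\varepsilon \notin C_{\geq 0}(\mathsf{G})$. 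Thus no ball around $f$ is contained in $C_{\geq 0}(\mathsf{G})$, and $f \notin \interior C_{\geq 0}(\mathsf{G})$.

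Since $\interior C_{\geq 0}(\mathsf{G}) \subseteq C_{\geq 0}(\mathsf{G})$ trivially, combining the two inclusions gives $\interior C_{\geq 0}(\mathsf{G}) = C_{+}(\mathsf{G})$.

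There is no real obstacle in this argument. The only point needing care is the extreme value theorem in the first inclusion. It applies because $\mathsf{G}$, being a compact manifold, is a compact topological space and $f$ is continuous.
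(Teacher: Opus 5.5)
Your proof is correct and is essentially the paper's: the inclusion $C_{+}(\mathsf{G}) \subseteq \interior C_{\geq 0}(\mathsf{G})$ takes the attained positive minimum as the ball radius, exactly as the paper does, and your contrapositive for the reverse inclusion uses the same perturbation $f - \tfrac{\varepsilon}{2}u$ that the paper applies directly inside a ball $B(f,r) \subset C_{\geq 0}(\mathsf{G})$ to get $f \geq r/2 > 0$. You are also right that only compactness of $\mathsf{G}$ is used, not connectedness or the Lie structure.
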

\noindent With Lemma~\ref{lem:intC{geq 0}}, it follows from Definition~\ref{def:closed_solid_cone} that $C_{\geq 0}(\mathsf{G})$ is a closed solid cone in the Banach space $\left(C(\mathsf{G}), \|\cdot\|_{\infty} \right)$. 
}

{\myblack
\begin{lemma}\cite[Theorems 2.1,\, 4.1,\, 4.2,\, 4.4]{Bus1973}\label{lem:Complete_metric_space}
Let $\mathsf{G}$ be a compact connected Lie group, and consider the Banach space $\bigl(C(\mathsf{G}),\|\cdot\|_{\infty}\bigr)$ with unit sphere $
        U \eqdef \{\, f \in C(\mathsf{G}) \,\,| \,\, \| f\|_{\infty} = 1\,\}$. Then the metric space $\bigl(C_{+}(\mathsf{G})\cap U,\; d_H\bigr)$ is complete.
\end{lemma}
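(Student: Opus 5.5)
The plan is to invoke the general completeness theorem of Bushell~\cite{Bus1973}. To do so, we verify that its hypotheses hold for the cone $\mathcal{K} = C_{\geq 0}(\mathsf{G})$ in $\mathcal{X} = C(\mathsf{G})$. Bushell's result (Theorems 4.1--4.4) states the following. If $\mathcal{K}$ is a closed solid cone in a real Banach space that is \emph{normal}, then $(\interior \mathcal{K} \cap U, d_H)$ is a complete metric space, where $U$ is the unit sphere. Theorem 2.1 gives that $d_H$ is a genuine metric on $\interior\mathcal{K}\cap U$. So the proof reduces to three checks.

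\textbf{Step 1: closed solid cone.} Closedness holds because $C_{\geq 0}(\mathsf{G}) = \bigcap_{R\in\mathsf{G}} \{f : f(R)\geq 0\}$ is an intersection of preimages of $[0,\infty)$ under the continuous evaluation functionals $f\mapsto f(R)$; indeed $|f(R)-g(R)|\le\|f-g\|_\infty$. Properties (ii) and (iii) of Definition~\ref{def:closed_solid_cone} are immediate from pointwise nonnegativity. Property (iv) holds since $f\ge 0$ and $-f\ge 0$ force $f\equiv 0$. Property (i) follows from Lemma~\ref{lem:intC{geq 0}}, which gives $\interior C_{\geq 0}(\mathsf{G}) = C_+(\mathsf{G})$, together with the fact that $C_+(\mathsf{G})$ contains the constant function $1$. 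By Remark~\ref{rem:d_H_properties}, $d_H$ is then finite on $C_+(\mathsf{G})$.

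\textbf{Step 2: normality.} This is the hypothesis that drives completeness in Bushell. A cone is normal if there is $\delta>0$ such that $\mathbf{0}\preceq \mathbf{x}\preceq \mathbf{y}$ implies $\|\mathbf{x}\|\le\delta\|\mathbf{y}\|$. Here it holds with $\delta=1$: if $0\le f\le g$ pointwise, then $\sup|f|\le\sup|g|$. The sup-norm is monotone on this cone, so this check is straightforward.

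\textbf{Step 3: the conclusion, which I expect to be the main obstacle.} With Steps 1 and 2 in hand, Theorem~4.4 of~\cite{Bus1973} yields completeness of $(C_+(\mathsf{G})\cap U, d_H)$. The subtle point is matching Bushell's exact hypotheses. Some of his completeness results are phrased for $\mathcal{K}^+\cap U$ or for a "part" (an equivalence class of comparable elements). We must therefore observe that $C_+(\mathsf{G})$ is a single part. Any $f,g\in C_+(\mathsf{G})$ are comparable: by compactness of $\mathsf{G}$ and continuity, $\min f,\min g>0$ and $\max f,\max g<\infty$. This gives $m(f,g)\ge \min f/\max g>0$ and $M(f,g)\le \max f/\min g<\infty$.

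If a direct argument is needed instead, take a $d_H$-Cauchy sequence $f_k$ in $C_+(\mathsf{G})\cap U$. The standard estimate for normal cones, $\|f-g\|_\infty\le (e^{d_H(f,g)}-1)$ for unit-norm $f,g$ in the cone, shows that $(f_k)$ is Cauchy in sup-norm. It therefore converges uniformly to some $f$ with $\|f\|_\infty=1$. Then $d_H$-Cauchyness gives uniform bounds $e^{-\epsilon}\,m(f_j,f_k)\ldots$ of the form $c\,f_K\le f_k\le C\,f_K$ for large $k$. Passing to the limit gives $f\ge c f_K>0$, so $f\in C_+(\mathsf{G})$, and $d_H(f_k,f)\to 0$.
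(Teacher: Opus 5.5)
Your proposal is correct and follows the same route as the paper, which gives no separate proof: it invokes Bushell's completeness theorems once Lemma~\ref{lem:intC{geq 0}} shows that $C_{\geq 0}(\mathsf{G})$ is a closed solid cone with interior $C_{+}(\mathsf{G})$. Your normality check (constant $1$, since the sup-norm is monotone on this cone) and your direct Cauchy-sequence argument via $\|f-g\|_\infty \le e^{d_H(f,g)}-1$ fill in a hypothesis the paper leaves implicit; to make the last step airtight, write the bounds as $e^{-\epsilon} f_k \le f_j \le e^{\epsilon} f_k$ for all $j,k \ge K$, so that letting $j\to\infty$ gives both $f \in C_{+}(\mathsf{G})$ and $d_H(f_k,f) \le 2\epsilon$.
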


{\mydarkred \begin{lemma}
\label{lem:R_f_isometry}
Let $\mathsf{G}$ be a compact connected Lie group and for any $f\in C_{+}(\mathsf{G})$, define $R_f:C_{+}(\mathsf{G})\to C_{+}(\mathsf{G})$ as
\((R_f g)(x)=f(x)/g(x), \quad x\in \mathsf{G}.\)
Then $R_f$ is an isometry w.r.t. Hilbert's projective metric $d_H$, i.e.,  $d_H(R_f(g_1), R_f(g_2)) = d_H(g_1, g_2)$ $\forall g_1, g_2 \in C_{+}(\mathsf{G})$.  
\end{lemma}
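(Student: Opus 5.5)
The plan is to compute $M$ and $m$ from Definition~\ref{def:Hilbert_metric} explicitly for the cone $C_{\geq 0}(\mathsf{G})$ restricted to strictly positive functions. Then show that $R_f$ merely swaps and inverts the two extremal ratios, so their quotient is unchanged.

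First, $R_f$ is well defined. If $f,g\in C_{+}(\mathsf{G})$, then $f/g$ is continuous and strictly positive, because $g$ never vanishes. So $R_f g\in C_{+}(\mathsf{G})$.

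Next, fix $g_1,g_2\in C_{+}(\mathsf{G})$. The order $g_1\preceq \lambda g_2$ means $\lambda g_2(x)-g_1(x)\ge 0$ for all $x$. Since $g_2>0$ pointwise, this is equivalent to $\lambda\ge g_1(x)/g_2(x)$ for all $x$. Hence
\[
M(g_1,g_2)=\sup_{x\in\mathsf{G}}\frac{g_1(x)}{g_2(x)},\qquad m(g_1,g_2)=\inf_{x\in\mathsf{G}}\frac{g_1(x)}{g_2(x)}.
\]
Compactness of $\mathsf{G}$ and continuity and positivity of $g_1/g_2$ make both quantities finite, positive, and attained. This matches Remark~\ref{rem:d_H_properties}, given Lemma~\ref{lem:intC{geq 0}}. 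Therefore
\[
d_H(g_1,g_2)=\log\Bigl(\sup_x \frac{g_1}{g_2}\Bigr)-\log\Bigl(\inf_x\frac{g_1}{g_2}\Bigr).
\]

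Now apply this formula to $R_f g_1=f/g_1$ and $R_f g_2=f/g_2$. Their pointwise ratio is
\[
\frac{f(x)/g_1(x)}{f(x)/g_2(x)}=\frac{g_2(x)}{g_1(x)},
\]
and the factor $f$ cancels because $f>0$. Thus $d_H(R_fg_1,R_fg_2)=\log\sup(g_2/g_1)-\log\inf(g_2/g_1)$. Using $\sup(g_2/g_1)=1/\inf(g_1/g_2)$ and $\inf(g_2/g_1)=1/\sup(g_1/g_2)$, which hold because the ratios are positive, this equals $\log\sup(g_1/g_2)-\log\inf(g_1/g_2)=d_H(g_1,g_2)$.

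The only real care needed is in deriving the closed-form expressions for $M$ and $m$. This requires strict positivity, so that one can divide by $g_2$ without flipping the inequality, and compactness, so that the extrema are finite and nonzero. Everything else is algebraic.
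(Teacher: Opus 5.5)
Your proof is correct and is essentially the paper's argument. The paper writes $R_f$ as pointwise inversion followed by multiplication by $f$, and cites two facts: multiplication by $f\in C_{+}(\mathsf{G})$ preserves $d_H$, and $M(\mathbf{x}_1,\mathbf{x}_2)=(m(\mathbf{x}_1^{-1},\mathbf{x}_2^{-1}))^{-1}$. These are exactly your cancellation of $f$ in the ratio and your identity $\sup(g_2/g_1)=1/\inf(g_1/g_2)$, though you also derive the closed forms $M(g_1,g_2)=\sup_x g_1/g_2$ and $m(g_1,g_2)=\inf_x g_1/g_2$, which the paper uses without stating.
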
}

\begin{remark}
\label{rem:T_1_Dom&Codom_C+}
Since the heat kernel $k_t$ is strictly positive on $\mathsf{G} \times \mathsf{G}$, it follows from \eqref{eq:(T_t f)(R)} that for any $f \in C_{+}(\mathsf{G})$, we have $(T_1 f)(R) > 0$ $\forall R\in \mathsf{G}$. Also, for any $f \in L^{2}(\mathsf{G})$ and $t>0$, we have $T_t f \in C^{\infty}(\mathsf{G})$ ~\cite[Theorem 7.6]{Grig2012}. Hence, for any $f \in C_{+}(\mathsf{G})$, we get $T_1 f \in C_{+}(\mathsf{G})$.
\end{remark}%
\begin{lemma}\cite[Proof similar to that of Lemma 5]{Che2016}
\label{lem:T_1_str_contraction}
Let $\mathsf{G}$ be a compact connected Lie group. There exists a constant $c \in [0,1)$ such that for all $f_1, f_2 \in C_{+}(\mathsf{G})$\,, we have $
d_H(T_1 f_1, T_1 f_2) \leq c \,\, d_H(f_1, f_2).
$
\end{lemma}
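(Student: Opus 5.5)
The plan is the Birkhoff--Bushell route: show that $T_1$ is a positive linear map on the cone $C_{\geq 0}(\mathsf{G})$ whose image of $C_{+}(\mathsf{G})$ has finite $d_H$-diameter. Birkhoff's contraction theorem, in the form given in \cite{Bus1973}, then yields the contraction ratio $c=\tanh(\Delta/4)<1$, where $\Delta$ is the projective diameter of $T_1(C_{+}(\mathsf{G}))$.

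\textbf{Step 1: kernel bounds.} Since $k_1$ is continuous and strictly positive on the compact set $\mathsf{G}\times\mathsf{G}$ (connectedness, \cite[Corollary 8.12]{Grig2012}), there are constants $0<\alpha\le\beta<\infty$ with $\alpha\le k_1(R,\tilde R)\le\beta$ for all $R,\tilde R$. Continuity of $k_1$ follows from smoothness of the heat kernel for $t>0$.

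\textbf{Step 2: image bounds.} For $f\in C_{+}(\mathsf{G})$, integrating the kernel bounds against $f\,d\mu$ gives
\[
\alpha\,\|f\|_{L^1}\le (T_1f)(R)\le \beta\,\|f\|_{L^1}\quad\text{for all } R\in\mathsf{G}.
\]
Writing $u\equiv 1$, this means $\alpha\|f\|_{L^1}\,u\preceq T_1 f\preceq \beta\|f\|_{L^1}\,u$, so $d_H(T_1f,u)\le\log(\beta/\alpha)$. By the triangle inequality, $d_H(T_1f_1,T_1f_2)\le 2\log(\beta/\alpha)=:\Delta<\infty$. By Remark~\ref{rem:T_1_Dom&Codom_C+}, $T_1$ maps $C_{+}(\mathsf{G})$ into itself, and it preserves $C_{\geq 0}(\mathsf{G})$ by positivity of the kernel. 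Also $T_1$ is linear and bounded on $(C(\mathsf{G}),\|\cdot\|_\infty)$.

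\textbf{Step 3: Birkhoff contraction.} Lemma~\ref{lem:intC{geq 0}} makes $C_{\geq 0}(\mathsf{G})$ a closed solid cone, so the contraction theorem of \cite{Bus1973} (as used in \cite[Lemma 5]{Che2016}) applies to the positive linear map $T_1$. It gives
\[
d_H(T_1f_1,T_1f_2)\le \tanh(\Delta/4)\,d_H(f_1,f_2),
\]
so we may take $c=\tanh\bigl(\tfrac12\log(\beta/\alpha)\bigr)\in[0,1)$.

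\textbf{Main obstacle.} The delicate point is that the Birkhoff--Hopf theorem must be invoked in the form valid for a general closed solid cone in a Banach space, not only the finite-dimensional version. If a self-contained argument is preferred, I would prove the contraction directly instead. Set $m=m(f_1,f_2)$ and $M=M(f_1,f_2)$, so that $f_1-mf_2$ and $Mf_2-f_1$ lie in $C_{\geq0}(\mathsf{G})$. Apply Step 2's bounds to $T_1$ of these two differences to lower-bound $m(T_1f_1,T_1f_2)$ and upper-bound $M(T_1f_1,T_1f_2)$ by the usual cross-ratio estimate. The differences may vanish somewhere, so Step 2's estimates are needed on $C_{\geq0}(\mathsf{G})$; they still hold there with $L^1$ norms, and the degenerate case of a zero difference is trivial. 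This recovers a constant of the form $\tanh(\Delta/4)$, or at worst $(1-\alpha/\beta)$-type, which is still strictly less than $1$.
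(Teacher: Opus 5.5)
Your proposal is correct and takes essentially the same route as the paper, which gives no separate proof and instead defers to the proof of Lemma~5 in \cite{Che2016}: bound the continuous, strictly positive kernel $k_1$ between constants $0<\alpha\le\beta$ on the compact set $\mathsf{G}\times\mathsf{G}$, conclude that the projective diameter of the image is at most $2\log(\beta/\alpha)$, and apply the Birkhoff--Bushell theorem to get $c=\tanh(\Delta/4)<1$; that theorem's proof is purely order-theoretic and holds for any closed solid cone, so your ``main obstacle'' is not a genuine one. One detail you should state explicitly: $\int_{\mathsf{G}} f\,\differential\mu>0$ for every nonzero $f\in C_{\geq 0}(\mathsf{G})$, because Haar measure charges nonempty open sets, and this is what lets your Step~2 bounds extend to the boundary elements $f_1-mf_2$ and $Mf_2-f_1$.
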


} 
\noindent{\myblack
Next, we investigate the necessary conditions for optimality, leading to the main result. 
\subsection{Conditions for optimality and the Schrödinger system}
The Lagrangian associated with \eqref{eq:SBP_on_G_varprob} is
\begin{align}
\label{eq:Lagrangian}
\mathcal{L}(\rho,\Omega,S)=\!\!\int_0^1\!\!\int_{\mathsf{G}} \Bigl\{ &\tfrac{1}{2}{\mydarkred \|\Omega(R,t)\|_{2}^2}\:\rho
\:+\:S\Bigl(\partial_t \rho  +\operatorname{div}(\rho R \widehat{\Omega}) \notag \\ 
&- \tfrac{\sigma^2}{2}\Delta_{\mathsf{G}} \rho\Bigr)\Bigr\} \differential\mu \,\differential t,
\end{align}
where $S : \mathsf{G} \times [0,1] \to \mathbb{R}$ is a smooth Lagrange multiplier. Let $\mathcal{P}_{01}(\mathsf{G}) \eqdef \bigl\{ \rho: \mathsf{G} \times [0,1] \to \mathbb{R}_{\geq 0} \,\big|\, \rho(\cdot,0) = \rho_0, \rho(\cdot,1) = \rho_1,  \;\text{for each }t \in [0,1],\int_{\mathsf{G}} \rho(R,t)\:\differential\mu(R) = 1\bigr\}$.
Performing the unconstrained minimization of the Lagrangian $\mathcal{L}$ over $\mathcal{P}_{01}(\mathsf{G}) \times \mathcal{U}$, we find that the optimal pair $(\rho^{\mathrm{opt}}, \Omega^{\mathrm{opt}})$ for \eqref{eq:SBP_on_G_varprob} must solve the following system of coupled PDEs:
\begin{subequations}
\label{eq:rhoOpt&psi_G}
\begin{align}
&\partial_t S + \frac{1}{2}{\mydarkred \norm{\left(R^{-1}\grad S\right)^{\vee}}_2^2} = -\,\frac{\sigma^2}{2}\,\Delta_{\mathsf{G}}S, 
\label{eq:1st_rhoOpt&psi_G} \\ 
&\partial_t\rho^{\mathrm{opt}} + \operatorname{div}(\rho^{\mathrm{opt}}\grad S) = \frac{\sigma^2}{2}\,\Delta_{\mathsf{G}}\rho^{\mathrm{opt}}, 
\label{eq:2nd_rhoOpt&psi_G} \\
&\rho^{\mathrm{opt}}(\cdot,0)=\rho_0,\quad \rho^{\mathrm{opt}}(\cdot,1)=\rho_1,
\label{eq:3rd_rhoOpt&psi_G}
\end{align}
\end{subequations}
and the optimal control 
\begin{equation}
\label{eq:Omega^{opt}} 
\Omega^{\mathrm{opt}}(R,t) = \left(R^{-1}\grad S\left(R,t\right)\right)^{\vee}.
\end{equation}
Using the Hopf--Cole transform $(\rho^{\text{opt}}, S) \mapsto (\varphi, \widehat{\varphi})$ given by 
\begin{subequations}
\label{eq:Hopf-Cole}
\begin{align}
&\varphi(R,t)=\exp\left(\frac{S(R,t)} {\sigma^2}\right),\label{1st_Hopf-Cole}\\ 
&\widehat{\varphi}(R,t) = \rho^{\mathrm{opt}}(R,t) \exp\left(-\frac{S(R,t)}{\sigma^2}\right),
\label{2nd_Hopf-Cole}
\end{align}
\end{subequations}
we can transform the system \eqref{eq:rhoOpt&psi_G} into the pair of linear (forward and backward) heat equations
\begin{align}
\partial_t\varphi = -\frac{\sigma^2}{2}\,\Delta_{\mathsf{G}}\varphi, \quad \partial_t\widehat{\varphi} = \frac{\sigma^2}{2}\,\Delta_{\mathsf{G}}\widehat{\varphi},
\label{HeatPDEsForwardBackward}
\end{align}
with coupled boundary conditions 
\begin{align}
\varphi(\cdot,0)\,\widehat{\varphi}(\cdot,0)=\rho_0,
\quad 
\varphi(\cdot,1)\,\widehat{\varphi}(\cdot,1)=\rho_1\,. 
\label{eq:coupled_bnd_cond_G}
\end{align}
Letting 
\(\varphi_1 \eqdef \varphi(\cdot, t=1),\widehat{\varphi}_0 \eqdef \widehat{\varphi}(\cdot, t=0)\), we can express the solution of \eqref{HeatPDEsForwardBackward} on $\mathsf{G}$ using the heat semigroup \eqref{eq:(T_t f)(R)} as 
\begin{align}
\varphi(R,t) := (T_{1-t}\varphi_1)(R), \quad \widehat{\varphi}(R,t) := (T_t \widehat{\varphi}_0)(R)\,, 
\label{eq:soln_heat_eqns_G}
\end{align}
$\forall R \in \mathsf{G},\;t\in[0,1]$.

Combining \eqref{eq:coupled_bnd_cond_G} and \eqref{eq:soln_heat_eqns_G}, we arrive at the so-called \emph{Schrödinger system} on $\mathsf{G}$:  
\begin{align}
\rho_0=\widehat{\varphi}_0\,T_{1}\varphi_1\,,\quad \rho_1=\varphi_1\, T_1\widehat{\varphi}_0,
\label{eq:Schrödinger_sys_G}
\end{align}
which is a pair of coupled integral equations in unknown function pair $(\varphi_1,\widehat{\varphi}_0)$.

We next prove, as our main result of this work, the existence-uniqueness of solution $(\varphi_1,\widehat{\varphi}_0)$ to this Schrödinger system \eqref{eq:Schrödinger_sys_G}. For doing so, we restrict the operator \(T_1\) to the subset $C_{+}(\mathsf{G})$ of $L^{2}(\mathsf{G})$, and take $C_{+}(\mathsf{G})$ as the codomain of $T_1$, which is valid because of Remark~\ref{rem:T_1_Dom&Codom_C+}. 
}

\begin{theorem}[Solution of the Schrödinger system on a compact connected Lie group]\label{thm:Soln_Schrödinger_sys_G}
Consider the SBP given as Problem \ref{problem:SBP_on_G} with endpoint PDFs $\rho_0 \,,\, \rho_1 \in C_+(\mathsf{G})$. Consider the operator $T_1 : C_{+}(\mathsf{G}) \to C_{+}(\mathsf{G})$\ with the heat kernel $k_1 : \mathsf{G} \times \mathsf{G} \to \Real_{> 0}$. Then there exist \(\varphi_1\,, \widehat{\varphi}_0 \,\in C_+(\mathsf{G})\) solving the Schrödinger system \eqref{eq:Schrödinger_sys_G}. Moreover, this pair $(\varphi_1\,, \widehat{\varphi}_0)$ is unique up to  reciprocal scaling: $(\varphi_1\,, \widehat{\varphi}_0) \to (\alpha\varphi_1\,, \widehat{\varphi}_0/\alpha)$ for any $\alpha>0$. 
\end{theorem}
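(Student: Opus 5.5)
The plan is the standard Franklin--Lorenz / Chen--Georgiou--Pavon fixed-point argument, carried out in the cone $C_{\geq 0}(\mathsf{G})$. By Lemma~\ref{lem:intC{geq 0}} this is a closed solid cone with interior $C_+(\mathsf{G})$. From the Schrödinger system \eqref{eq:Schrödinger_sys_G}, eliminating $\widehat{\varphi}_0 = \rho_0/T_1\varphi_1$ (legitimate since $T_1\varphi_1\in C_+(\mathsf{G})$ by Remark~\ref{rem:T_1_Dom&Codom_C+}) gives $\varphi_1 = \rho_1/T_1(\rho_0/T_1\varphi_1)$. So I would study the composite map
\begin{equation*}
\mathcal{C} \eqdef R_{\rho_1}\circ T_1\circ R_{\rho_0}\circ T_1 : C_+(\mathsf{G})\to C_+(\mathsf{G}),
\end{equation*}
where $R_f$ is as in Lemma~\ref{lem:R_f_isometry}. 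This map is well defined because $\rho_0,\rho_1\in C_+(\mathsf{G})$, and each factor preserves $C_+(\mathsf{G})$ by Remark~\ref{rem:T_1_Dom&Codom_C+} and continuity and positivity of quotients of positive continuous functions. By Lemma~\ref{lem:R_f_isometry} the two $R$ factors are $d_H$-isometries, and by Lemma~\ref{lem:T_1_str_contraction} each $T_1$ contracts $d_H$ by $c<1$. Hence $d_H(\mathcal{C}f,\mathcal{C}g)\le c^2 d_H(f,g)$.

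To apply Banach's fixed point theorem I pass to the complete metric space $(C_+(\mathsf{G})\cap U, d_H)$ of Lemma~\ref{lem:Complete_metric_space}. Define $\widetilde{\mathcal{C}}(f) \eqdef \mathcal{C}f/\|\mathcal{C}f\|_\infty$, which maps $C_+(\mathsf{G})\cap U$ into itself. By the scale invariance in Remark~\ref{rem:d_H_properties}, $\widetilde{\mathcal{C}}$ is still a $c^2$-contraction. This gives a unique fixed point $f^\star$ with $\mathcal{C}f^\star = \lambda f^\star$, where $\lambda=\|\mathcal{C}f^\star\|_\infty>0$.

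Next I must show $\lambda=1$ after rescaling. Since $\mathcal{C}$ is homogeneous of degree $+1$ (each $R$ factor is of degree $-1$ and $T_1$ is linear), rescaling $f^\star$ does not change $\lambda$, so $\lambda$ must be forced to $1$ by mass conservation. Set $\varphi_1 = f^\star$ and $\widehat{\varphi}_0=\rho_0/T_1\varphi_1$. Then $\varphi_1 T_1\widehat{\varphi}_0 = \rho_1/\lambda$. Integrating against the Haar measure and using symmetry of the heat kernel, $k_1(R,\tilde R)=k_1(\tilde R,R)$ (self-adjointness of $\Delta_{\mathsf G}$), Fubini gives
\begin{equation*}
\int \varphi_1\, T_1\widehat{\varphi}_0\,\differential\mu=\int \widehat{\varphi}_0\, T_1\varphi_1\,\differential\mu=\int\rho_0\,\differential\mu=1 .
\end{equation*}
Since also $\int\rho_1\,\differential\mu=1$, we get $\lambda=1$. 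Both components lie in $C_+(\mathsf{G})$, which proves existence.

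For uniqueness, let $(\varphi_1',\widehat{\varphi}_0')$ be any other solution in $C_+(\mathsf{G})$. Then $\mathcal{C}\varphi_1'=\varphi_1'$, so $\varphi_1'/\|\varphi_1'\|_\infty$ is a fixed point of $\widetilde{\mathcal{C}}$ and equals $f^\star$. Thus $\varphi_1'=\alpha\varphi_1$ with $\alpha>0$, and linearity of $T_1$ then forces $\widehat{\varphi}_0'=\rho_0/T_1\varphi_1'=\widehat{\varphi}_0/\alpha$. Conversely, any such reciprocal rescaling clearly still solves \eqref{eq:Schrödinger_sys_G}.

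I expect the main obstacle to be the step $\lambda=1$, because it needs symmetry of $k_1$ with respect to Haar measure. That symmetry comes from $\mathsf G$ being unimodular and $\Delta_{\mathsf G}$ being self-adjoint on $L^2(\mathsf G,\mu)$. A secondary subtlety is that Lemma~\ref{lem:T_1_str_contraction} must give a uniform $c<1$, which holds since $\inf k_1/\sup k_1>0$ on the compact $\mathsf G\times\mathsf G$ (Birkhoff's bound gives $c=\tanh(\Delta/4)$ with finite projective diameter $\Delta$). I take that lemma as given.
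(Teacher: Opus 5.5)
Your proposal is correct and follows the paper's proof essentially step for step: the same composite map $R_{\rho_1}\circ T_1\circ R_{\rho_0}\circ T_1$, the same normalized $c^2$-contraction on the complete space $(C_{+}(\mathsf{G})\cap U, d_H)$ with Banach's fixed point theorem, and the same mass-conservation argument to force $\lambda=1$. You are, if anything, more explicit than the paper on two points it leaves implicit: the symmetry of $k_1$ (self-adjointness of $T_1$) used in the $\lambda=1$ computation, and the homogeneity argument reducing an arbitrary solution $\varphi_1'$ to the fixed point of the normalized map, which is what yields uniqueness.
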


\begin{corollary}\label{corollary:dynamicSinkhorn}
From the proof of Theorem~\ref{thm:Soln_Schrödinger_sys_G}, the dynamic Sinkhorn recursion $\varphi_{1} \rightarrow(\varphi_1)_{\text{next}}$ defined via \eqref{eq:varphi_eq} with arbitrary initial guess is guaranteed to converge to a fixed point with worst-case linear rate of convergence w.r.t. $d_{H}$.
\end{corollary}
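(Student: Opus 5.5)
The plan is to write the dynamic Sinkhorn recursion as a single self-map of $C_{+}(\mathsf{G})$, show that this map is a strict contraction in $d_H$ with constant $c^2$, and then read off linear convergence of the iterates. Eliminating $\widehat{\varphi}_0=\rho_0/T_1\varphi_1$ from \eqref{eq:Schrödinger_sys_G} gives the fixed-point equation \eqref{eq:varphi_eq}. In the notation of Lemma~\ref{lem:R_f_isometry} I will write it as
\begin{equation*}
(\varphi_1)_{\text{next}} = \mathcal{C}(\varphi_1), \qquad \mathcal{C} \eqdef R_{\rho_1}\circ T_1\circ R_{\rho_0}\circ T_1 .
\end{equation*}
Take an arbitrary initial guess $\varphi_1^{(0)}\in C_{+}(\mathsf{G})$ and set $\varphi_1^{(k+1)}=\mathcal{C}(\varphi_1^{(k)})$. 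Because $\rho_0,\rho_1\in C_+(\mathsf{G})$, Remark~\ref{rem:T_1_Dom&Codom_C+} shows that $T_1$ maps $C_{+}(\mathsf{G})$ into itself. Each $R_{\rho_i}$ also maps $C_{+}(\mathsf{G})$ into itself. Hence every iterate is well defined and lies in $C_{+}(\mathsf{G})$, which is $\interior C_{\geq 0}(\mathsf{G})$ by Lemma~\ref{lem:intC{geq 0}}. Remark~\ref{rem:d_H_properties} then guarantees that all $d_H$-distances between iterates are finite.

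\textbf{Contraction estimate.} For $f,g\in C_{+}(\mathsf{G})$, apply Lemma~\ref{lem:R_f_isometry} twice and Lemma~\ref{lem:T_1_str_contraction} twice:
\begin{equation*}
d_H(\mathcal{C}f,\mathcal{C}g) = d_H(T_1R_{\rho_0}T_1 f, T_1R_{\rho_0}T_1 g) \le c\, d_H(T_1 f,T_1 g)\le c^2 d_H(f,g).
\end{equation*}
Here $c\in[0,1)$ is the constant from Lemma~\ref{lem:T_1_str_contraction}. Since $d_H$ is invariant under positive scaling, $\mathcal{C}$ induces a well-defined map $\tilde{\mathcal{C}}(f)\eqdef \mathcal{C}(f)/\|\mathcal{C}(f)\|_\infty$ on $C_{+}(\mathsf{G})\cap U$. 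This map is a $c^2$-contraction on the complete metric space of Lemma~\ref{lem:Complete_metric_space}.

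\textbf{Convergence and rate.} The Banach fixed-point theorem gives a unique fixed point $\varphi_1^\star\in C_{+}(\mathsf{G})\cap U$ of $\tilde{\mathcal{C}}$; this is precisely the fixed point produced in the proof of Theorem~\ref{thm:Soln_Schrödinger_sys_G}. The normalized iterates $\varphi_1^{(k)}/\|\varphi_1^{(k)}\|_\infty$ converge to $\varphi_1^\star$, with
\begin{equation*}
d_H(\varphi_1^{(k)},\varphi_1^\star)\le c^{2k}\, d_H(\varphi_1^{(0)},\varphi_1^\star)\le \frac{c^{2k}}{1-c^2}\, d_H(\varphi_1^{(1)},\varphi_1^{(0)}).
\end{equation*}
The first inequality uses $\mathcal{C}\varphi_1^\star=\lambda\varphi_1^\star$ for some $\lambda>0$, together with scale invariance. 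The right-hand side is finite and involves only the initial guess, so the convergence is linear with ratio at most $c^2<1$, i.e.\ worst-case linear in $d_H$. The same bound holds for the unnormalized iterates, because $d_H$ does not see scaling. Since the Schrödinger-system solution is only defined up to reciprocal scaling, this projective convergence is exactly the sense of ``fixed point'' in the statement.

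The companion iterate $\widehat{\varphi}_0^{(k)}=R_{\rho_0}T_1\varphi_1^{(k)}$ converges too. By Lemma~\ref{lem:R_f_isometry} and Lemma~\ref{lem:T_1_str_contraction}, $d_H(\widehat{\varphi}_0^{(k)},\widehat{\varphi}_0^\star)\le c\,d_H(\varphi_1^{(k)},\varphi_1^\star)$, so it inherits the same linear rate.

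The only delicate point is the passage from the projective fixed point to the statement about the actual iterates. This requires checking that $\mathcal{C}\varphi_1^\star$ is a positive multiple of $\varphi_1^\star$, so that $d_H(\mathcal{C}\varphi_1^{(k)},\varphi_1^\star)=d_H(\mathcal{C}\varphi_1^{(k)},\mathcal{C}\varphi_1^\star)$. I would verify this first, directly from the definition of $\tilde{\mathcal{C}}$.
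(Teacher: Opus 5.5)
Your proposal is correct and is essentially the paper's argument. $F=R_{\rho_1}\circ T_1\circ R_{\rho_0}\circ T_1$ (your $\mathcal{C}$) is a $c^2$-contraction in $d_H$, its normalization $\tilde{F}$ is a strict contraction on the complete space $(C_{+}(\mathsf{G})\cap U,d_H)$, and the Banach fixed-point theorem gives convergence at linear rate $c^2$; you make this explicit with the a priori bound and the companion estimate for $\widehat{\varphi}_0^{(k)}$.

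One correction of emphasis: the step that needs care is not $\lambda>0$, which is immediate from $\tilde{F}(\varphi_1^\star)=\varphi_1^\star$, but $\lambda=1$. The theorem's proof obtains $\lambda=1$ from $\int_{\mathsf{G}}\rho_0\,\mathrm{d}\mu=\int_{\mathsf{G}}\rho_1\,\mathrm{d}\mu=1$ and the symmetry of $T_1$, and this is what makes the $d_H$-limit (and every positive multiple of it) a genuine fixed point of \eqref{eq:varphi_eq}. You inherit it only because you identify $\varphi_1^\star$ with the theorem's $\phi^{*}$, not from the projective argument alone.
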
%
\noindent Corollary \ref{corollary:dynamicSinkhorn} provides the pair $(\varphi_1,\widehat{\varphi}_0)$ solving \eqref{eq:Schrödinger_sys_G} on $\mathsf{G}$. Using \eqref{eq:soln_heat_eqns_G}, we can then compute $\left(\varphi,\widehat{\varphi}\right)$, and thus the original primal-dual pair $(\rho^{\mathrm{opt}},S)$ from \eqref{eq:Hopf-Cole} as
\begin{align}
\!(\rho^{\mathrm{opt}},S)= \left(\varphi\widehat{\varphi},\sigma^2\log \varphi\right).
\label{RecoverPrimalDualPair}    
\end{align}
Notice that the optimally controlled joint PDF is unique since the product $\varphi\widehat{\varphi}$ cancels the initialization-dependent scaling constant. Using \eqref{eq:Omega^{opt}}, we find the optimal control
\begin{equation}
\label{eq:Omega^{opt}_soln}
\Omega^{\mathrm{opt}}(R,t)=\left(\sigma^2 R(t)^{-1}\grad \log \varphi(R,t)\right)^{\vee}.
\end{equation}
Thus, the controller $\Omega^{\mathrm{opt}}$ provides a geometric (coordinate-free) solution of the SBP on the compact connected Lie group $\mathsf{G}$, thereby solving Problem~\ref{problem:SBP_on_G}.

\section{Numerical Results}
\label{sec:numerical_simulations}
We next present numerical results for the solution of Problem \ref{problem:SBP_on_G} for $\SO2$ and $\SO3$. Our computation used log-domain stabilized \cite[Sec. 4.4]{peyre2019computational} dynamic Sinkhorn recursion (Corollary \ref{corollary:dynamicSinkhorn}). More examples and animations can be found in our 
\texttt{GitHub} repository\footnote{\label{footnote:code_link}
Code and animation at \href{https://github.com/gradslab/SbpLieGroups.git}{https://github.com/gradslab/SbpLieGroups.git}}.

\subsection{SBP on \texorpdfstring{$\mathsf{SO}(2)$}{SO(2)}}\label{subsec:SO2numericalexample} 
In $\SO2$, we have the Fourier series representation of the heat kernel \cite[Ch. 4.4]{stein2011fourier} $k_t(\theta_1,\theta_2) = \frac{1}{2\pi}\sum_{m\in\mathbb{Z}} e^{-\frac{1}{2}\sigma^2 m^2 t} e^{{\mathrm{i}}m\theta}$, ${\mathrm{i}}:=\sqrt{-1}$, $\theta:=\theta_1-\theta_2$,
where the diffusion strength $\sigma>0$. This allows us to numerically implement \eqref{eq:(T_t f)(R)} 
 via Fast Fourier Transform (FFT), and thereby the dynamic Sinkhorn recursion in Corollary \ref{corollary:dynamicSinkhorn}.

Since $\SO2\cong\mathbb{S}^{1}$ (circle group), we consider an instance of Problem \ref{problem:SBP_on_G} with endpoint von Mises PDFs
\begin{align}
\rho_i(\theta) = e^{\kappa \cos(\theta - \theta_{0i})}/(2\pi I_0(\kappa)), \,i\in\{0,1\},\,\kappa = 40, 
\label{eq:vonMises_dist}
\end{align}
where $I_0(\cdot)$ is the modified Bessel function of the first kind of order zero, $\theta_{00} = \pi/6$, $\theta_{01}=11\pi/6$. Fig.~\ref{fig:SBP-nearby-peaks} shows the resulting optimally controlled joint PDFs $\rho^{\mathrm{opt}}(\cdot,t)$, $t\in[0,1]$, wherein $\rho_0$ is shown in \emph{green solid}, and $ \rho_1$ is shown in \emph{dashed red}. This figure highlights that our SBP solution indeed evolves on $\SO2$ and not on an interval of length $2\pi$ of the real line. If the solution were on $[0,2\pi)$ with the corresponding optimal controller designed on the Euclidean space, then the $\rho^{\mathrm{opt}}(\cdot,t)$ would have traveled across the longer arc between the peaks, making the solution suboptimal. Instead, they move across the shorter arc, confirming the intrinsic geometry-preserving nature of the solution.     

\subsection{SBP on \texorpdfstring{$\mathsf{SO}(3)$}{SO(3)}}\label{subsec:SO3numericalexample}
The Lie algebra of $\SO3$ is $\mathfrak{so}(3)$, the set of $3\times 3$ skew-symmetric matrices. From Rodrigues’ formula, for any \(R \in \SO3\), there exists
\(\omega \in \mathbb{R}^{3}\) with \(\|\omega\|_2 \le \pi\) such that
\[
R = \boldsymbol{\exp}(\hat{\omega})
  = I
    + \frac{\sin\|\omega\|_2}{\|\omega\|_2}\,\hat{\omega}
    + \frac{1 - \cos\|\omega\|_2}{\|\omega\|_2^{2}}\,\hat{\omega}^{2}, \,\hat{\omega}\in\mathfrak{so}(3),
\]
where $\boldsymbol{\exp} : \mathfrak{so}(3) \to \SO3$ is the matrix exponential. In particular, for $R_1,R_2\in\SO3$, we have the heat kernel~\cite{nikolayev1997normal}:
\begin{align}
k_{t}(R_1,R_2) = \sum_{\ell=0}^{\infty}(2\ell + 1)e^{-\frac{\ell(\ell + 1)\sigma^2 t}{2}}\frac{\sin\!\bigl((\ell + \tfrac12)\,\|\omega_{12}\|_2\bigr)} {\sin(\|\omega_{12}\|_2/2)}
\label{eq:heat_kernel_SO3}
\end{align}
where $\|\omega_{12}\|_2$ is the magnitude of the rotation angle about the Euler axis between $R_1$ and $R_2$, i.e., $\omega_{12}\in\Real^3$ is such that $R_2^{\top} R_1 = \boldsymbol{\exp}(\hat{\omega}_{12})$, $\hat{\omega}_{12}\in\mathfrak{so}(3)$. We use $\sigma = 0.5$, and the series \eqref{eq:heat_kernel_SO3} truncated at $l_{\text{max}} = 60$ to implement \eqref{eq:(T_t f)(R)}, and thereby the dynamic Sinkhorn recursion in Corollary \ref{corollary:dynamicSinkhorn}.

Using the above, we solve an instance of Problem \ref{problem:SBP_on_G} with endpoint von Mises PDFs on $\SO3$ \cite{lee2013stochastic}:
\begin{equation}
\label{eq:PDF_SO3_vonMises} 
\rho_i(R)
    \propto
\exp\!\left\{\kappa\,\cos(\|\omega\|_2 - \|\omega_i\|_2)
        \right\}, \;i\in\{0,1\},
\end{equation}
where $\omega, \omega_i \in \Real^3$ are related to $R, R_i \in \SO3$ by $R = \boldsymbol{\exp}(\hat{\omega})$ and $R_i = \boldsymbol{\exp}(\hat{\omega}_i)$. For simulation, we fix $\kappa = 30, \|\omega_0\|_2 = 1$, $\|\omega_1\|_2 = 2$. Fig.~\ref{fig:PDF_rotAngle} shows the computed $\rho^{\mathrm{opt}}(\cdot,t), t \in (0, 1)$, supported on $\|\omega\|_2$, with given $\rho_0$ in \emph{dashed black} and $\rho_1$ in \emph{dashed red}, as well as the corresponding Sinkhorn recursion convergence w.r.t. $d_{H}$.

\begin{figure}
    \centering
    \includegraphics[width =0.9\columnwidth]{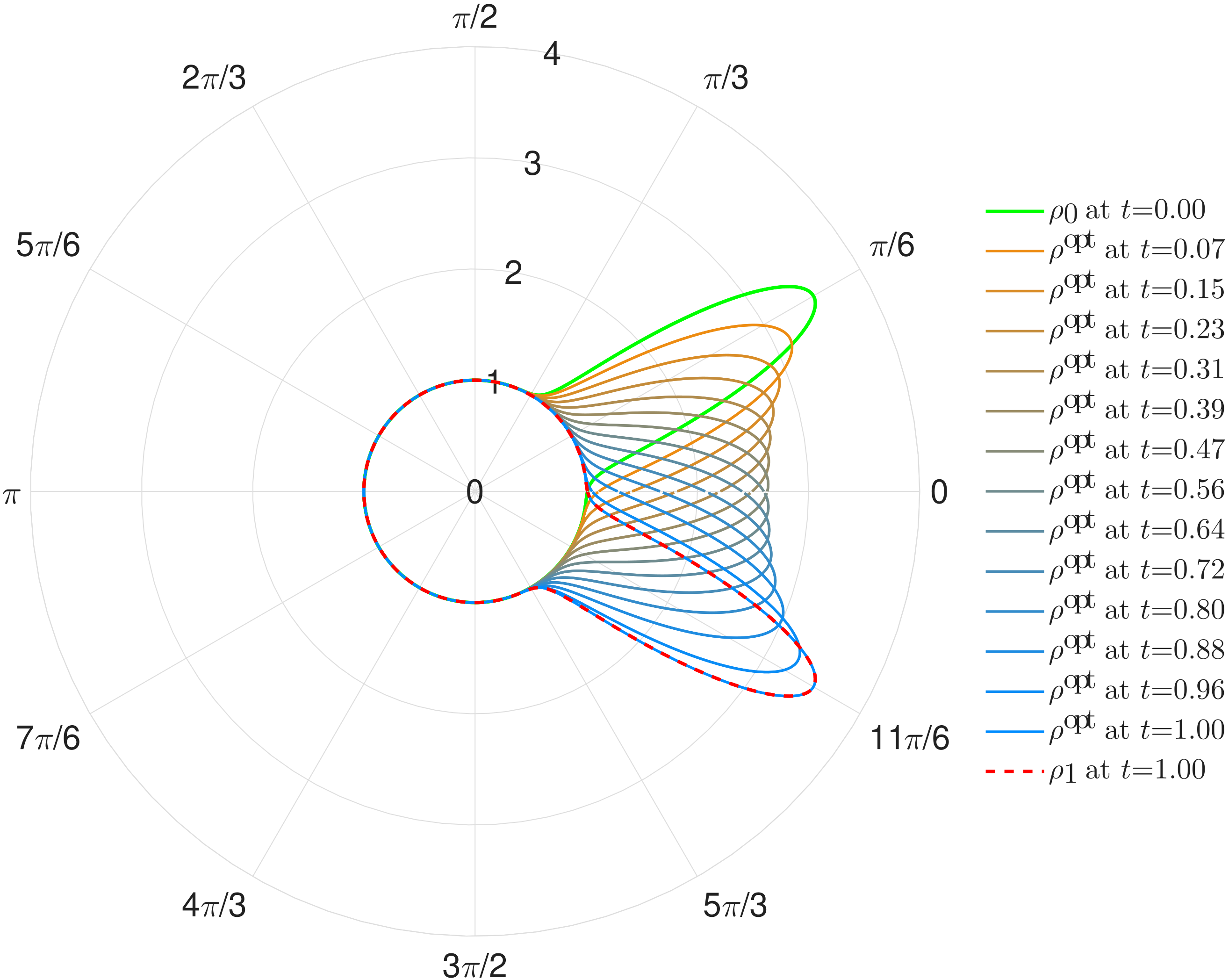}
    \caption{PDFs $\rho^{\mathrm{opt}}(\cdot,t)$, $t\in[0,1]$, for numerical example in Sec. \ref{subsec:SO2numericalexample}.}
    \label{fig:SBP-nearby-peaks}
\end{figure}
\begin{figure}
    \centering
    \includegraphics[width = 0.9\columnwidth]{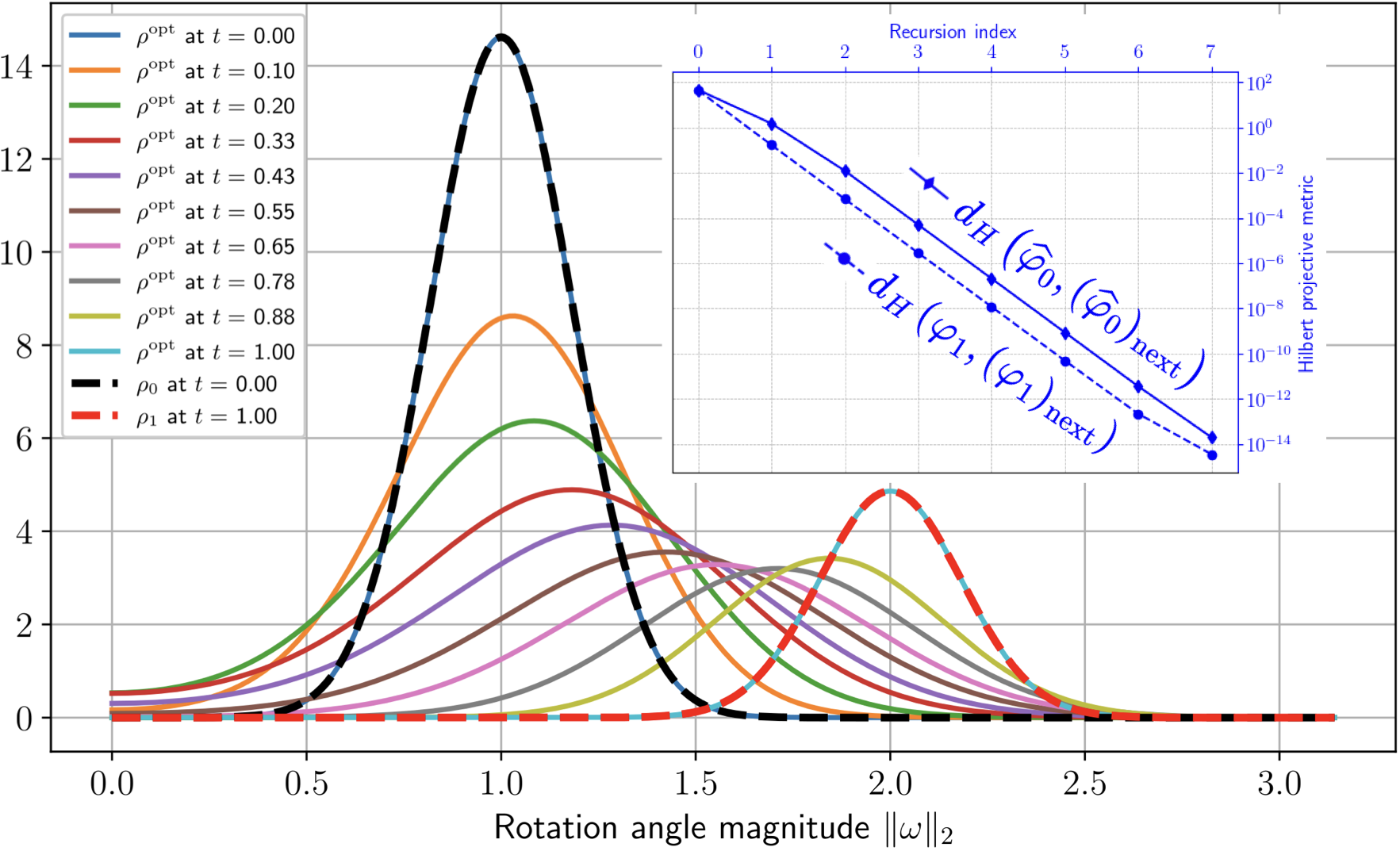}
    \caption{Main plot: PDFs $\rho^{\mathrm{opt}}(\cdot,t)$, $t\in[0,1]$ for numerical example in Sec. \ref{subsec:SO3numericalexample}. Inset: Sinkhorn recursion convergence for pair $(\varphi_1,\widehat{\varphi}_0)$ w.r.t. $d_{H}$.}
    \label{fig:PDF_rotAngle}
    \vspace{-0.15in}
\end{figure}

\section{Concluding Remarks}
\label{sec:Conclusion} 
We study the Schrödinger bridge problem (SBP) on a compact connected Lie group from a stochastic control viewpoint. We propose to leverage the intrinsic geometric structure of the Lie group to formulate, analyze and compute the solution for the SBP. Specifically, we show that the existence-uniqueness of solution, as well as its computation via Sinkhorn recursion, can be done from a geometric (coordinate-free) perspective. Numerical results are given to demonstrate our SBP solutions on $\SO2$ and on $\SO3$. While these two groups were used for illustration, similar numerics can be performed via Peter-Weyl expansion \cite{fegan1983fundamental} of the heat kernel that is valid for any compact Lie group. Indeed, \eqref{eq:heat_kernel_SO3} is a special case of this expansion.

{\myblack
\section*{Appendix}


{\myblack
\begin{proof}[Proof of Lemma~\ref{lem:intC{geq 0}}]
    We first prove $\interior C_{\geq 0}(\mathsf{G}) \subset C_{+}(\mathsf{G})$. Consider any $f \in \interior C_{\geq 0}(\mathsf{G})$. Then there exists $r > 0$ such that the ball $B(f,r)$ centered at $f$ of radius $r$ is contained in $C_{\geq 0}(\mathsf{G})$, i.e., $B(f,r) \subset C_{\geq 0}(\mathsf{G})$. Let $u : \mathsf{G} \to \mathbb{R}$ be the constant function $u(R) = 1$ $\forall R \in \mathsf{G}$. Construct $g : \mathsf{G} \to \mathbb{R}$ as $g \eqdef f - \tfrac{r}{2} u$. Since $f,u \in C(\mathsf{G})$ and $C(\mathsf{G})$ is a real Banach space (and so, a real vector space), we have $g \in C(\mathsf{G})$. Also, it follows from the definition of $g$ that $\|g-f\|_{\infty} < r$, so $g \in B(f,r)$. Since $B(f,r) \subset C_{\geq 0}(\mathsf{G})$, we have $g \in C_{\geq 0}(\mathsf{G})$. This means $\forall R \in \mathsf{G}$, 
    \begin{equation*}
    g(R) \geq 0 \,\, \Rightarrow \,\, f(R) - \frac{r}{2}\,u(R) \geq 0 \,\, \Rightarrow \,\, f(R) \geq \frac{r}{2} > 0.  
    \end{equation*}
    This shows $f \in C_{+}(\mathsf{G})$ and so, $\interior C_{\geq 0}(\mathsf{G}) \subset C_{+}(\mathsf{G})$.  

    We now prove $C_{+}(\mathsf{G}) \subset \interior C_{\geq 0}(\mathsf{G})$. Consider any $h \in C_{+}(\mathsf{G})$. Since $\mathsf{G}$ is a compact Lie group and hence a compact topological space, $h$ being continuous on $\mathsf{G}$ implies that $h$ attains its minimum (because of the compactness of $\mathsf{G}$), that is, there exists an $R^{*} \in \mathsf{G}$ such that 
    $
    \min_{R \in \mathsf{G}} h(R) = h(R^{*}) > 0.  
    $
    We have $h(R^{*}) > 0$ because $h \in C_{+}(\mathsf{G})$. Let $\delta = h(R^{*})$. We now show that $B(h,\delta) \subset C_{\geq 0}(\mathsf{G})$. (This will prove $h \in \interior C_{\geq 0}(\mathsf{G})$.) Consider any $\tilde{h} \in B(h, \delta)$. Then $\|\tilde{h} - h \|_{\infty} < \delta$, which means $\sup_{R \in \mathsf{G}} |\tilde{h}(R) - h(R)| < \delta$. This implies that for each $R \in \mathsf{G}$, 
    \begin{align*}
    &|\tilde{h}(R) - h(R)| \leq \sup_{Q \in \mathsf{G}} |\tilde{h}(Q) - h(Q)| < \delta \\
    \Rightarrow \,\,\, &|\tilde{h}(R) - h(R)| < \delta \\
    \Leftrightarrow \,\,\, &-\delta < \tilde{h}(R) - h(R) < \delta \\
    \Rightarrow \,\,\, &h(R) - \delta < \tilde{h}(R) \\
    \Rightarrow \,\,\, &0 \leq  h(R) - \min_{Q \in \mathsf{G}} h(Q) < \tilde{h}(R), 
    \end{align*}
    where in the last implication, we used $\delta = h(R^{*}) = \min_{Q \in \mathsf{G}} h(Q)$. So, we have for each $R \in \mathsf{G}$, $0 < \tilde{h}(R) \Rightarrow 0 \leq \tilde{h}(R)$. Hence, $\tilde{h} \in C_{\geq 0}(\mathsf{G})$, and so, consequently, we have $B(h, \delta) \subset C_{\geq 0}(\mathsf{G})$. This proves $h \in \interior C_{\geq 0}(\mathsf{G})$, and therefore, $C_{+}(\mathsf{G}) \subset \interior C_{\geq 0}(\mathsf{G})$. So, with both inclusions shown to hold, the proof is complete. 
\end{proof}


\begin{proof}[Proof of Lemma~\ref{lem:R_f_isometry}]
{\myblack 
Since multiplication by a fixed $f \in C_{+}(\mathsf{G})$ is an isometry w.r.t. $d_H$, and $M(\mathbf{x}_1,\mathbf{x}_2) = (m(\mathbf{x}_1^{-1},\mathbf{x}_2^{-1} ))^{-1}$, the statement follows.
}
\end{proof}

\begin{proof}[Proof of Theorem~\ref{thm:Soln_Schrödinger_sys_G}]
{\mydarkred From \eqref{eq:Schrödinger_sys_G}, we have  }
\begin{equation}
\label{eq:varphi_eq}
\varphi_1 \;=\; \frac{\rho_1}{T_1\left(\rho_0/(T_1(\varphi_1))\right)},
\end{equation}
which motivates map $F :  C_{+}(\mathsf{G}) \to C_{+}(\mathsf{G})$ given by 
\begin{equation}
\label{eq:map F}
    F \eqdef R_{\rho_{1}} \circ T_{1} \circ R_{\rho_{0}} \circ T_{1} ,
\end{equation}
wherein $R_{\rho_{0}},R_{\rho_{1}}$ are defined as in Lemma~\ref{lem:R_f_isometry}. Then $\forall f_1, f_2 \in C_{+}(\mathsf{G})$, Lemma~\ref{lem:R_f_isometry} and Lemma~\ref{lem:T_1_str_contraction} yield    
\begin{align}
    &d_H(F(f_1),F(f_2)) \notag \\ 
    &= d_H\Big(\,R_{\rho_{1}} \big(\,T_{1} ( R_{\rho_{0}} ( T_{1}
(f_1)))\,\big) \, , \, R_{\rho_{1}}\big(\,T_{1} ( R_{\rho_{0}} ( T_{1}
(f_2)))\,\big)\,\Big) \notag \\
    &\leq c^2\,\,d_H(f_1, f_2), \;\text{where}\;c \in [0,1)\;\text{from Lemma~\ref{lem:T_1_str_contraction}}.
    \label{F_strict_contraction}
\end{align}
Since $c^2 \in [0,1)$, map $F$ is contractive on $C_{+}(\mathsf{G})$ w.r.t. $d_H$\,. 

Letting $U$ be the unit sphere in the Banach space $\left(C(\mathsf{G}), \|\cdot\|_{\infty} \right)$, consider a map $\tilde{F} : C_{+}(\mathsf{G}) \cap U \to C_{+}(\mathsf{G}) \cap U$ as $\tilde{F}(\phi) \eqdef F(\phi)/\| F(\phi) \|_{\infty}$. Observe that for every $f_1, f_2 \in  C_{+}(\mathsf{G}) \cap U $, we have    
\begin{align*}
d_H\left(\tilde{F}(f_1), \tilde{F}(f_2)\right) &= d_H\left(\frac{F(f_1)}{\| F(f_1) \|_{\infty}} , \frac{F(f_2)}{\| F(f_2) \|_{\infty}}\right) \\ 
&\overset{\mathrm{Remark}~\ref{rem:d_H_properties}}{=} d_H\left(F(f_1), F(f_2)\right) \\ 
&\overset{\mathrm{by\,}\eqref{F_strict_contraction}}{\leq} c^2\,\,d_H(f_1, f_2).  
\end{align*}
With $c^2 \in [0,1)$, {\mydarkred we see} that $\tilde{F}$ is also a strict contraction on its domain $C_{+}(\mathsf{G})\, \cap\, U$ w.r.t. $d_H$\,.

For any $\phi_0 \in C_{+}(\mathsf{G})$, now let $\phi_k \eqdef \tilde{F}^{k}\left(\frac{\phi_0}{\| \phi_0 \|_{\infty}}\right), \,k \geq 1$, where $\tilde{F}^{k} = \tilde{F} \circ \tilde{F} \circ ... \circ \tilde{F}$ (composed $k$ times). Since $\tilde{F}$ is, w.r.t. $d_H$\,, a strict contraction on $C_{+}(\mathsf{G})\, \cap\, U$ which is a complete metric space (Lemma~\ref{lem:Complete_metric_space}), by the Banach fixed point theorem, the map $\tilde{F}$ has a unique fixed point in $C_{+}(\mathsf{G})\, \cap\, U$, i.e., $\exists!\phi^{*} \in C_{+}(\mathsf{G})\, \cap\, U$ such that $\tilde{F}(\phi^{*}) = \phi^{*}$\,. This $\phi^{*}$ is the limit of the sequence $\{\phi_{k}\}_{k \geq 1}$ in $\left(C_{+}(\mathsf{G})\, \cap\, U\,,\,d_H\right)$, i.e., $\phi_k \xrightarrow[]{d_H} \phi^{*}$ in $C_{+}(\mathsf{G})\, \cap\, U$\,, which means that $\lim_{k \to \infty} d_H(\phi_k,\phi^{*}) = 0$.






We now show that $F(\phi^{*}) = \phi^{*}$\,. Since $\tilde{F}(\phi^{*}) = \phi^{*}$\,, we {\mydarkred have} $F(\phi^{*}) = \lambda\, \phi^{*}$\,, where $\lambda := \| F(\phi^{*}) \|_{\infty} > 0$. Define inner product $\langle\cdot,\cdot\rangle_{C(\mathsf{G})} : C(\mathsf{G}) \times C(\mathsf{G}) \to \mathbb{R}$ as $\langle f,g\rangle_{C(\mathsf{G})} \eqdef \int_{\mathsf{G}} f(R)\,g(R)\,\differential\mu(R)$. Since $\langle \phi, R_{\rho_{1}}(\phi) \rangle_{C(\mathsf{G})}$ $= \int_{\mathsf{G}} \rho_1(R)\differential\mu(R) = 1 \;\; \forall\phi \in C_{+}(\mathsf{G})$, we obtain 
\begin{align*}
1 &= \int_{\mathsf{G}} \rho_1 \differential\mu(R)\\  
  &= \langle \,\,(T_1 \circ R_{\rho_0} \circ T_1)(\phi^{*}) \,\,,\, (\underbrace{R_{\rho_1} \circ T_1 \circ R_{\rho_0} \circ T_1}_{F})(\phi^{*}) \,\,\rangle_{C(\mathsf{G})} \\
  &= \langle \,\,(T_1 \circ R_{\rho_0} \circ T_1)(\phi^{*}) \,\,,\, \lambda\, \phi^{*}\,\,\rangle_{C(\mathsf{G})} \\
  &=\langle\,\, (R_{\rho_0} \circ T_1)(\phi^{*}) \,\,,\,T_1(\lambda \phi^{*}) \,\,\rangle_{C(\mathsf{G})} \\
  &=\int_{\mathsf{G}} \rho_0(R)\, \lambda\:\differential\mu(R) = \lambda\,,     
\end{align*}
where the third equality used $F(\phi^{*}) = \lambda\, \phi^{*}$. Hence $F(\phi^{*}) = \lambda\, \phi^{*} = \phi^{*}$\,, and we have  
\begin{align}
\label{eq:sol_varphi}
 \phi^{*} &= F(\phi^{*}) = \rho_1/\left(T_1\left(\rho_0/\left(T_1(\phi^{*})\right)\right)\right).  
\end{align} Comparing \eqref{eq:sol_varphi} with \eqref{eq:varphi_eq}, we see that $\phi^{*}$ solves \eqref{eq:varphi_eq}, i.e.,
\begin{equation}
\label{eq:Schrödinger_sys_soln} \qquad
\varphi_1 \;=\; \phi^{*},\qquad
\widehat{\varphi}_0 = \rho_0/\left(T_1\phi^{*}\right),
\end{equation} 
is a solution of the Schrödinger system \eqref{eq:Schrödinger_sys_G}. The uniqueness of this solution up to multiplication of $\varphi_1$ and division of $\widehat{\varphi}_0$ by some positive constant follows from the uniqueness of $\phi^{*}$. This completes the proof.  
\end{proof}

}

\bibliographystyle{IEEEtran}
\bibliography{myreferences}

\begin{thebibliography}{10}
\providecommand{\url}[1]{#1}
\csname url@samestyle\endcsname
\providecommand{\newblock}{\relax}
\providecommand{\bibinfo}[2]{#2}
\providecommand{\BIBentrySTDinterwordspacing}{\spaceskip=0pt\relax}
\providecommand{\BIBentryALTinterwordstretchfactor}{4}
\providecommand{\BIBentryALTinterwordspacing}{\spaceskip=\fontdimen2\font plus
\BIBentryALTinterwordstretchfactor\fontdimen3\font minus \fontdimen4\font\relax}
\providecommand{\BIBforeignlanguage}[2]{{%
\expandafter\ifx\csname l@#1\endcsname\relax
\typeout{** WARNING: IEEEtran.bst: No hyphenation pattern has been}%
\typeout{** loaded for the language `#1'. Using the pattern for}%
\typeout{** the default language instead.}%
\else
\language=\csname l@#1\endcsname
\fi
#2}}
\providecommand{\BIBdecl}{\relax}
\BIBdecl

\bibitem{chen2021stochastic}
Y.~Chen, T.~T. Georgiou, and M.~Pavon, ``Stochastic control liaisons: Richard sinkhorn meets {G}aspard {M}onge on a {S}chr{ö}dinger bridge,'' \emph{Siam Review}, vol.~63, no.~2, pp. 249--313, 2021.

\bibitem{CalHal2022}
K.~F. Caluya and A.~Halder, ``Wasserstein proximal algorithms for the {S}chrödinger bridge problem: Density control with nonlinear drift,'' \emph{IEEE Transactions on Automatic Control}, vol.~67, no.~3, pp. 1163--1178, 2022.

\bibitem{CheGeoPav2016}
Y.~Chen, T.~T. Georgiou, and M.~Pavon, ``On the relation between optimal transport and {S}chr{\"o}dinger bridges: A stochastic control viewpoint,'' \emph{Journal of Optimization Theory and Applications}, vol. 169, no.~2, pp. 671--691, 2016.

\bibitem{CalHal2019}
K.~F. Caluya and A.~Halder, ``Finite horizon density control for static state feedback linearizable systems,'' \emph{arXiv preprint arXiv:1904.02272}, 2019.

\bibitem{CalHal2020}
K.~F. Caluya and A.~Halder, ``Finite horizon density steering for multi-input state feedback linearizable systems,'' in \emph{2020 American Control Conference (ACC)}.\hskip 1em plus 0.5em minus 0.4em\relax IEEE, 2020, pp. 3577--3582.

\bibitem{CalHal_RefSchBri2020}
K.~F. Caluya and A.~Halder, ``Reflected {S}chr{\"o}dinger bridge: Density control with path constraints,'' in \emph{2021 American Control Conference (ACC)}.\hskip 1em plus 0.5em minus 0.4em\relax IEEE, 2021, pp. 1137--1142.

\bibitem{teter2025hopf}
A.~Teter and A.~Halder, ``On the {H}opf-{C}ole transform for control-affine {S}chrödinger bridge,'' \emph{arXiv preprint arXiv:2503.17640}, 2025.

\bibitem{teter2025schrodinger}
A.~M. Teter, W.~Wang, and A.~Halder, ``Schrödinger bridge with quadratic state cost is exactly solvable,'' \emph{IEEE Transactions on Automatic Control}, 2025.

\bibitem{haddad2020prediction}
S.~Haddad, K.~F. Caluya, A.~Halder, and B.~Singh, ``Prediction and optimal feedback steering of probability density functions for safe automated driving,'' \emph{IEEE Control Systems Letters}, vol.~5, no.~6, pp. 2168--2173, 2020.

\bibitem{nodozi2023neural}
I.~Nodozi, C.~Yan, M.~Khare, A.~Halder, and A.~Mesbah, ``Neural {S}chrödinger bridge with {S}inkhorn losses: Application to data-driven minimum effort control of colloidal self-assembly,'' \emph{IEEE Transactions on Control Systems Technology}, vol.~32, no.~3, pp. 960--973, 2023.

\bibitem{teter2025probabilistic}
A.~M. Teter, I.~Nodozi, and A.~Halder, ``Probabilistic {L}ambert problem: Connections with optimal mass transport, {S}chr{ö}dinger bridge, and reaction-diffusion {PDE}s,'' \emph{SIAM Journal on Applied Dynamical Systems}, vol.~24, no.~1, pp. 16--43, 2025.

\bibitem{Bir1957}
G.~Birkhoff, ``Extensions of {J}entzsch's theorem,'' \emph{Transactions of the American Mathematical Society}, vol.~85, no.~1, pp. 219--227, 1957.

\bibitem{Bus1973}
P.~J. Bushell, ``Hilbert's metric and positive contraction mappings in a {B}anach space,'' \emph{Archive for Rational Mechanics and Analysis}, vol.~52, no.~4, pp. 330--338, 1973.

\bibitem{lee2013stochastic}
T.~Lee, ``Stochastic optimal motion planning and estimation for the attitude kinematics on {SO(3)},'' in \emph{52nd IEEE Conference on Decision and Control}.\hskip 1em plus 0.5em minus 0.4em\relax IEEE, 2013, pp. 588--593.

\bibitem{chirikjian2011}
G.~S. Chirikjian, \emph{Stochastic models, information theory, and {L}ie groups, volume 2: Analytic methods and modern applications}.\hskip 1em plus 0.5em minus 0.4em\relax Springer Science \& Business Media, 2011, vol.~2.

\bibitem{Piotr2007}
P.~Koszmider, ``Chapter 52 - the interplay between compact spaces and the {B}anach spaces of their continuous functions,'' in \emph{Open Problems in Topology II}, E.~Pearl, Ed.\hskip 1em plus 0.5em minus 0.4em\relax Amsterdam: Elsevier, 2007, pp. 567--580.

\bibitem{tapp2016matrix}
K.~Tapp, \emph{Matrix groups for undergraduates}.\hskip 1em plus 0.5em minus 0.4em\relax American Mathematical Soc., 2016, vol.~79.

\bibitem{folland1999real}
G.~B. Folland, \emph{Real analysis: modern techniques and their applications}.\hskip 1em plus 0.5em minus 0.4em\relax John Wiley \& Sons, 1999.

\bibitem{Faraut2008}
J.~Faraut, \emph{Analysis on {L}ie Groups: An Introduction}, ser. Cambridge Studies in Advanced Mathematics.\hskip 1em plus 0.5em minus 0.4em\relax Cambridge: Cambridge University Press, 2008.

\bibitem{bullo2005geometric}
F.~Bullo and A.~D. Lewis, \emph{Geometric control of mechanical systems: modeling, analysis, and design for simple mechanical control systems}.\hskip 1em plus 0.5em minus 0.4em\relax Springer, 2005, vol.~49.

\bibitem{hsu2002}
E.~P. Hsu, \emph{Stochastic analysis on manifolds}.\hskip 1em plus 0.5em minus 0.4em\relax American Mathematical Soc., 2002, no.~38.

\bibitem{Lee2025}
\BIBentryALTinterwordspacing
T.~Lee and G.~S. Chirikjian, ``Geometric interpretation of {B}rownian motion on {R}iemannian manifolds,'' 2025. [Online]. Available: \url{https://arxiv.org/abs/2510.19991}
\BIBentrySTDinterwordspacing

\bibitem{Grig2012}
A.~Grigor'yan, \emph{Heat Kernel and Analysis on Manifolds}.\hskip 1em plus 0.5em minus 0.4em\relax American Mathematical Society, 2012.

\bibitem{Che2016}
Y.~Chen, T.~Georgiou, and M.~Pavon, ``Entropic and displacement interpolation: a computational approach using the {H}ilbert metric,'' \emph{SIAM Journal on Applied Mathematics}, vol.~76, no.~6, pp. 2375--2396, 2016.

\bibitem{peyre2019computational}
G.~Peyr{\'e} and M.~Cuturi, \emph{Computational optimal transport: with applications to data science}.\hskip 1em plus 0.5em minus 0.4em\relax Now Foundations and Trends, 2019.

\bibitem{stein2011fourier}
E.~M. Stein and R.~Shakarchi, \emph{Fourier analysis: an introduction}.\hskip 1em plus 0.5em minus 0.4em\relax Princeton University Press, 2011, vol.~1.

\bibitem{nikolayev1997normal}
D.~I. Nikolayev and T.~I. Savyolov, ``Normal distribution on the rotation group {SO(3)},'' \emph{Texture, Stress, and Microstructure}, vol.~29, no. 3-4, pp. 201--233, 1997.

\bibitem{fegan1983fundamental}
H.~Fegan, ``The fundamental solution of the heat equation on a compact {L}ie group,'' \emph{Journal of differential geometry}, vol.~18, no.~4, pp. 659--668, 1983.

\end{thebibliography}

\end{document}